\DeclareSymbolFont{T2Aletters}{T2A}{cmr}{m}{it} 
\newtheorem{Theorem}{Theorem}
\newtheorem{Corollary}[Theorem]{Corollary}
\newtheorem{Proposition}[Theorem]{Proposition}
\newtheorem{Lemma}[Theorem]{Lemma}
\theoremstyle{definition}
\newtheorem{Definition}[Theorem]{Definition}
\newtheorem{Example}[Theorem]{Example}
\theoremstyle{remark}
\newtheorem{Remark}[Theorem]{Remark}
\newcommand{\di}{\mathrm{d}} 
\newcommand{\DM}{\mathop{\mathrm{DM}}\nolimits}
\newcommand{\supp}{\mathop{\mathrm{supp}}\nolimits}
\newcommand{\vol}{\mathop{\mathrm{vol}}}
\newcommand{\abs}[1]{\left|#1\right|}
\newcommand{\rank}{\mathop{\mathrm{rk}}}
\newcommand{\st}{s.\,t.\ } 
\newcommand{\ie}{\textit{i.\,e.\ }} 
\newcommand{\eg}{\textit{e.\,g.\ }} 
\newcommand{\Z}{\mathbb{Z}}
\newcommand{\R}{\mathbb{R}}
\newcommand{\CC}{\mathbb{C}}
\newcommand{\Bcal}{\mathcal{B}}
\newcommand{\Dcal}{\mathcal{D}}
\newcommand{\Zcal}{\mathcal{Z}}
\newcommand{\Pcal}{\mathcal{P}}
\newcommand{\Tcal}{\mathcal{T}}
\newcommand{\Jcal}{\mathcal{J}}
\newcommand{\eps}{\varepsilon}
\newcommand{\Bcyr}{\text{\textit{\CYRB}}} 
\newcommand{\res}{\mathop{\mathrm{Res}}}
\newcommand{\todd}{\mathop{\mathrm{Todd}}}
\newcommand{\ideal}{\mathop{\mathrm{ideal}}}
\newcommand{\spa}{\mathop{\mathrm{span}}}
\newcommand{\cone}{\mathop{\mathrm{cone}}}
\newcommand{\BB}{ \mathbb B}
\newcommand{\cfrak}{\mathfrak c}
\newcommand{\diff}[1]{\frac{\partial}{\partial #1}}
\newcommand{\sym}{\mathop{\mathrm{Sym}}\nolimits} 
\newcommand{\pair}[2]{\langle #1,#2 \rangle}
\newcommand{\interior}{\mathop{\mathrm{int}}}
\title[{Lattice points, box splines, and Todd operators}]{Lattice points in polytopes, box splines, and Todd operators}
\author{Matthias Lenz}
\email{lenz@maths.ox.ac.uk}
\address{%
Mathematical Institute\\
24-29 St Giles'\\
Oxford\\
OX1 3LB\\
United Kingdom
}
\thanks{The author was supported by a Junior Research Fellowship
 of Merton College (University of Oxford).
 }
\date{\today}
\subjclass[2010]{Primary: 
05B35, 
19L10, 
52B20, 
52B40; 
Secondary:
13B25,  
14M25, 
16S32,  
41A15, 
47F05
}
\keywords{lattice polytope, box spline, vector partition function, Khovanskii-Pukhlikov formula, matroid, zonotopal algebra}
\newcommand{\Xintro}{%
Let $X\subseteq U\cong \R^d$ be a finite list of vectors that spans $U$\!.
}
\begin{document}

\begin{abstract}
Let $X$ be a list of vectors that is totally unimodular.
In a previous article the author proved that every real-valued function on the
 set of interior lattice points 
 of the zonotope defined by $X$ can be extended to a function on
the whole zonotope of the form
$p(D)B_X$ in a unique way, where
$p(D)$ is a differential operator that is contained in the
so-called internal $\Pcal$-space. 
In this paper we construct an explicit solution to this interpolation problem in terms of Todd operators.
 As a corollary we obtain a slight generalisation of the Khovanskii-Pukhlikov formula that relates the volume
  and the number of integer points in a smooth lattice polytope. 
\end{abstract}
\maketitle

%
%
%
%

\section{Introduction}
Box splines and multivariate splines measure the volume of certain variable polytopes. The vector partition function that 
 measures the number of integral points in polytopes can be seen as a discrete version of these spline functions.
 Splines and vector partition functions have recently received a lot of attention by researchers in various fields including approximation theory,
algebra, combinatorics, and representation theory. A standard reference from the 
approximation theory point of view is the book \cite{BoxSplineBook} by
 de Boor, H\"ollig, and Riemenschneider. The combinatorial and algebraic aspects are stressed in the book  \cite{concini-procesi-book}
 by De Concini and Procesi.

 Khovaniskii and Pukhlikov proved a remarkable formula that relates the volume and the number of integer points in a smooth polytope \cite{pukhlikov-khovanski-1992}.
  The connection is made via Todd operators, 
  \ie differential operators of type $\frac{\partial_x}{1 - e^{\partial_x}}$.
 The formula is closely related to the Hirzebruch-Riemann-Roch Theorem for smooth projective toric varieties %
(see \cite[Chapter 13]{cox-little-schenck-2011}).   
  De Concini, Procesi, and Vergne have shown that the  Todd operator is in a certain sense inverse
 to convolution with the box spline \cite{deconcini-procesi-vergne-2010c}. This implies the Khovanskii-Pukhlikov formula and 
 more generally the formula of Brion-Vergne \cite{brion-vergne-1997}.

 In this paper we will prove a slight generalisation of the deconvolution formula by De Concini, Procesi, and Vergne. 
 The operator that we  use is obtained from the Todd operator, but it is simpler, \ie it a polynomial contained in the 
 so-called internal $\Pcal$-space.
 Our proof uses deletion-contraction, so in some sense we provide a matroid-theoretic proof of the Khovanskii-Pukhlikov formula.

 Furthermore, we will construct bases for the spaces $\Pcal_-(X)$
 and $\Pcal(X)$ that 
 were studied by Ardila and Postnikov in connection with power ideals \cite{ardila-postnikov-2009} and by Holtz and Ron 
 within the framework  of zontopal algebra \cite{holtz-ron-2011}. 
  Up to now,  no 
 general construction for a basis of the space $\Pcal_-(X)$ was known.

\medskip
 Let us introduce our notation. It is similar to the one used in \cite{concini-procesi-book}.
  We fix a $d$-dimensional real vector space $U$ and a lattice $\Lambda\subseteq U$.
  Let $X=(x_1,\ldots, x_N) \subseteq \Lambda$ be a finite list of vectors that spans $U$. 
 We assume that $X$ is totally unimodular with respect to $\Lambda$, \ie every basis for $U$ that can be selected from $X$ is also
 a lattice basis for $\Lambda$.
Note that $X$ can be identified with a linear map $X : \R^N \to U$. 
Let $u\in U$. We define the variable polytopes
\begin{align}
 \Pi_X(u) &:=  \{ w \in \R^N_{\ge 0} :   X w = u  \} 
 \quad \text{ and } \quad  \Pi^1_X(u) :=  \Pi_X(u) \cap [0;1]^N.
\end{align}
Note that any convex polytope can be written in the form  $\Pi_X(u)$ for suitable $X$ and $u$.
 The dimension of these two polytopes is at most $N-d$. We define
the 
\begin{align}
\text{\emph{vector partition function} }
\Tcal_X(u) &:= \abs{\Pi_X(u) \cap \Z^N}, \\
\text{ the \emph{box spline} } B_X(u) &:= \det(XX^T)^{-1/2}\vol\nolimits_{N-d}{\Pi^1_X(u)}, \\
\text{
 and the \emph{multivariate spline} }
 T_X(u) &:= \det(XX^T)^{-1/2}\vol\nolimits_{N-d}{\Pi_X(u)}.
 \end{align}
 Note that we have to assume that $0$ is not contained in the convex hull of $X$ in order for $T_X$ and $\Tcal_X$ to be well-defined.
 Otherwise,  $\Pi_X(u)$ is an unbounded polyhedron.
The \emph{zonotope} $Z(X)$ is defined as
 \begin{equation}
 Z(X):= \left\{ \sum_{i=1}^N \lambda_i x_i : 0\le \lambda_i \le 1  \right\} = X \cdot [0,1]^N.
 \end{equation}
We denote its set of interior lattice points by $\Zcal_-(X) := \interior(Z(X)) \cap \Lambda$. 
 The symmetric algebra over $U$ is denoted by $\sym(U)$.
  We fix a basis $s_1,\ldots, s_d$ for the lattice $\Lambda$. This makes it possible to identify $\Lambda$ with $\Z^d$,
  $U$ with $\R^d$, $\sym(U)$ with the polynomial ring
   $\R[s_1,\ldots, s_d]$, and $X$ with a $(d\times N)$-matrix. Then
   $X$ is totally unimodular if and only if every non-singular square submatrix of this matrix has determinant $1$ or $-1$.
 The base-free setup is more convenient when working with quotient vector spaces.
 
 We denote the dual vector space by $V = U^*$ and we fix a basis $t_1,\ldots, t_d$ that is dual to the basis for 
  $U$. An element of $\sym(U)$ can be seen as a differential operator on $\sym(V)$,  \ie
   $\sym(U) \cong \R[s_1,\ldots, s_d] \cong \R[\diff{t_1},\ldots, \diff{t_d}]$.
   For $f\in \sym(U)$ and $p \in \sym(V)$ we  write $f(D) p$ to denote the polynomial 
   in $\sym(V)$ that is obtained when $f$ acts on $p$ as a differential operator.
  It is known that the box spline is piecewise polynomial and its local pieces are contained in $\sym(V)$. 
  We will mostly use elements of $\sym(U)$ as differential operators 
   on its local pieces.

Note that a vector $u\in U$ defines a linear form $u \in \sym(U)$.
For a sublist $Y\subseteq X$, we define $p_Y := \prod_{y\in Y} y$.  For example, if $Y=((1,0),(1,2))$, then 
 $p_Y=s_1^2 + 2s_1s_2$. Furthermore, $p_\emptyset := 1$.
 Now we define the  
\begin{align}
\text{\emph{central $\Pcal$-space} } \Pcal(X) &:= \spa\{ p_Y :  \rank(X\setminus Y)= \rank(X) \} \\
\text{and the \emph{internal $\Pcal$-space} }   \Pcal_-(X) &:= \bigcap_{x\in X} \Pcal(X\setminus x).
\end{align}
The space $\Pcal_-(X)$ was introduced in \cite{holtz-ron-2011} where it was also shown that the dimension of this space is equal to 
 $\abs{\Zcal_-(X)}$.
The space $\Pcal(X)$ first appeared in approximation theory \cite{akopyan-saakyan-1988,boor-dyn-ron-1991,dyn-ron-1990}. 
 These two $\Pcal$-spaces and generalisations were later studied by various authors, including
  \cite{ardila-postnikov-2009,berget-2010,holtz-ron-xu-2012,lenz-hzpi-2012,lenz-forward-2012, li-ron-2011}.%

In \cite{lenz-interpolation-2012}, the author proved the following theorem, which will be made more explicit in the present paper.
\begin{Theorem}
\label{Theorem:weakHoltzRon}
Let $X\subseteq \Lambda\subseteq U\cong \R^d$ 
be a list of vectors that is totally unimodular and spans $U$. %
Let $f$ be a real valued function on $\Zcal_-(X)$, 
the set of interior lattice points of the zonotope defined by $X$.

Then there exists a unique polynomial $p\in \Pcal_-(X)\subseteq\R[s_1,\ldots, s_d]$, \st 
 $p(D)B_X$ is a continuous function and its restriction to $\Zcal_-(X)$ is  equal to $f$. %

\smallskip
Here, $p(D)$ denotes the differential operator obtained from $p$ by replacing  
 the variable $s_i$ by $\diff{s_i}$
and $B_X$  denotes  the box spline defined by $X$.
\end{Theorem}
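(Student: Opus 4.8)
The plan is to reformulate the theorem as the statement that the linear map
\[
  \Phi_X\colon \Pcal_-(X)\longrightarrow\R^{\Zcal_-(X)},\qquad
  p\longmapsto\bigl(p(D)B_X\bigr)\big|_{\Zcal_-(X)},
\]
is a bijection, and to establish this in three steps. First, $p(D)B_X$ is continuous for every $p\in\Pcal_-(X)$ --- so that $\Phi_X$ is well defined --- a fact known from the theory of box splines, reflecting that the internal $\Pcal$-space is adapted to the walls along which $B_X$ loses smoothness. Second, $\dim\Pcal_-(X)=\abs{\Zcal_-(X)}$ by the theorem of Holtz and Ron quoted above, so that it suffices to prove that $\Phi_X$ is injective. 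Third, injectivity: since the Fourier--Laplace transform of $B_X$ is the entire function $\prod_{x\in X}\bigl(1-e^{-\pair{x}{\cdot}}\bigr)/\pair{x}{\cdot}$, which is not identically zero, the map $p\mapsto p(D)B_X$ is injective on $\sym(U)$, so it is enough to show that a continuous $g=p(D)B_X$ with $p\in\Pcal_-(X)$ and $g|_{\Zcal_-(X)}=0$ vanishes identically. Such a $g$ also vanishes on $\partial Z(X)$, by continuity together with $\supp g\subseteq Z(X)$, hence at \emph{every} lattice point of $Z(X)$; and because the transform of $B_X$ vanishes at every nonzero point of $2\pi\Lambda^*$, Poisson summation applied to $u^\alpha g(u)$ forces $\int_U u^\alpha g(u)\,\di u=0$ for every multi-index $\alpha$ of order less than the smallest cocircuit size of $X$.

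To deduce $g\equiv 0$ I would argue by induction on $\abs X$ via deletion--contraction. If $X$ has a coloop --- in particular if $X$ is a lattice basis --- then $\Zcal_-(X)=\emptyset$ and $\Pcal_-(X)=0$, and there is nothing to prove; a loop of $X$ may be deleted without changing $B_X$, $Z(X)$, $\Zcal_-(X)$, or $\Pcal_-(X)$. Otherwise $X$ has no coloop and $\abs X>\rank X$; fix $x\in X$ and use the spline recursions
\[
  B_X=B_{X\setminus x}\ast\chi_{[0,x]},\qquad
  x(D)B_X(u)=B_{X\setminus x}(u)-B_{X\setminus x}(u-x),
\]
where $\chi_{[0,x]}$ is the uniform probability measure on the segment from $0$ to $x$, together with the known deletion--contraction recursions for the internal $\Pcal$-space from zonotopal algebra \cite{holtz-ron-2011,ardila-postnikov-2009}. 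For a suitable $x$ these give $\Pcal_-(X)=\Pcal_-(X\setminus x)\oplus x\cdot\Pcal_-(X/x)$ (the summand $x\cdot\Pcal_-(X/x)$ realised inside $\sym(U)$ after choosing the lattice basis so that $x=s_d$), provided $x$ lies in no two-element cocircuit; if every element does, one first contracts a two-element cocircuit, \ie a nontrivial series pair, which decreases $\abs X$ and leaves $\Pcal_-$ and $\Zcal_-$ unchanged up to the evident identification. Writing $p=p_1+x\,p_2$ accordingly, the displayed identities express $g=p(D)B_X$ in terms of $B_{X\setminus x}$, and --- restricting to the lattice points of $Z(X)$, sliced along the lines $u+\R x$ and projected along $x$ --- the vanishing of $g$ on $\Zcal_-(X)$ should become equivalent to the vanishing of the part contributed by $p_1$ on $\Zcal_-(X\setminus x)$ and of the part contributed by $p_2$ on $\Zcal_-(X/x)$, whence $p_1=p_2=0$ by the inductive hypothesis and $p=0$.

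The main obstacle is precisely this matching of the interpolation data across the recursion. On the one hand, one must check that along each line $u+\R x$ the telescoping produced by $x(D)B_X(u)=B_{X\setminus x}(u)-B_{X\setminus x}(u-x)$ collapses to values of $B_{X\setminus x}$ on $\partial Z(X\setminus x)$, which vanish by continuity, so that the surviving contributions are indexed \emph{exactly} by $\Zcal_-(X\setminus x)$ and $\Zcal_-(X/x)$, with no double counting; total unimodularity is indispensable here, since it forces every $x\in X$ to be primitive in $\Lambda$ (so that the slices step through $\Lambda$ in unit increments) and makes the lattice points of $Z(X)$, $Z(X\setminus x)$, and $Z(X/x)$ correspond correctly (it is also what makes the dimension equality $\dim\Pcal_-(X)=\abs{\Zcal_-(X)}$ true at all, the left-hand side being a matroid invariant). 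On the other hand, the contraction term is delicate because $p_2\in\Pcal_-(X/x)$ enters the identity for $g$ through $B_{X\setminus x}$ rather than through $B_{X/x}$, so one has to exploit that $B_{X/x}$ is obtained from $B_{X\setminus x}$ by integrating out the $x$-direction. A secondary, more routine point is to make the internal-$\Pcal$ recursions precise, including the series case, and to verify that $\Phi_X$ intertwines them.

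Finally, an alternative to proving injectivity of $\Phi_X$ would be to exhibit, for each $\lambda\in\Zcal_-(X)$, an explicit $p_\lambda\in\Pcal_-(X)$ with $(p_\lambda(D)B_X)(\mu)=\delta_{\lambda\mu}$ for all $\mu\in\Zcal_-(X)$; this proves surjectivity at once, and hence, with the dimension equality, the theorem. Such a construction is, however, exactly an explicit solution of the interpolation problem, and providing one --- built from Todd operators --- is the main business of the present paper.
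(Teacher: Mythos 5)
Your overall strategy --- continuity of $p(D)B_X$ for $p\in\Pcal_-(X)$, the dimension count $\dim\Pcal_-(X)=\abs{\Zcal_-(X)}$, and a deletion--contraction induction that telescopes along the lines $u+\R x$ and sums in the direction of $x$ --- is indeed the shape of the proof this paper points to: Theorem~\ref{Theorem:weakHoltzRon} is not proved here but quoted from \cite{lenz-interpolation-2012}, whose argument is the internal-space analogue of the diagram \eqref{eq:ExactSequencesExtended}, with the maps $\nabla_x$ and $\Sigma_x$ and a bijection between $\Zcal_-(X)\setminus\Zcal_-(X\setminus x)$ and $\Zcal_-(X/x)$. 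The problem is that the structural input you quote as known is false. There is no decomposition $\Pcal_-(X)=\Pcal_-(X\setminus x)\oplus x\cdot\Pcal_-(X/x)$ in \cite{holtz-ron-2011,ardila-postnikov-2009} or elsewhere: every element of $\Pcal_-(X)$ has degree at most $N-d-1$ (it lies in $\Pcal(X\setminus y)$ for each $y$), whereas $x$ times any lift of $\Pcal_-(X/x)$ contains elements of degree $N-d$ whenever $\Pcal_-(X/x)$ attains its top degree. Concretely, for $X=(e_1,e_1,e_2,e_2,e_1+e_2)$ and $x=e_1+e_2$ (which lies in no two-element cocircuit, so your proviso does not exclude it), $\Pcal_-(X/x)\cong\R[t]_{\le 2}$ and $x\cdot t^2$ has degree $3$, while every element of $\Pcal_-(X)$ has degree at most $2$. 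The correct known structure has the two roles interchanged --- deletion enters via multiplication by $x$, contraction via the quotient map $\pi_x$, as in the top row of \eqref{eq:ExactSequencesExtended} --- and even that exact sequence for the \emph{internal} space is not citable: establishing it, together with its compatibility with $\nabla_x$ and $\Sigma_x$, is essentially the content of \cite{lenz-interpolation-2012}, and its delicacy is exactly why the present paper makes its ``detour'' through $\Pcal(X)$ rather than arguing in $\Pcal_-(X)$. Your auxiliary claim that contracting an element of a series pair leaves $\Pcal_-$ and $\Zcal_-$ unchanged is likewise asserted without proof.

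Two further gaps. First, Step 1 --- continuity of $p(D)B_X$ for \emph{every} $p\in\Pcal_-(X)$ --- is part of what the theorem asserts, not a quotable fact from the box-spline literature; in the present paper it only reappears as Corollary~\ref{Corollary:interalPcontinuous}, deduced \emph{from} the Main Theorem. (With the paper's literal definition of $\Pcal(X\setminus x)$ for non-spanning deletions it even fails in the presence of coloops: for $X=(e_1,e_1,e_2)$ one has $1\in\Pcal_-(X)$ while $B_X$ is discontinuous; so your base case ``$X$ has a coloop, hence $\Pcal_-(X)=0$ and there is nothing to prove'' needs either the convention $\Pcal(N)=\{0\}$ for non-spanning $N$ or an argument via the continuity requirement.) Second, the step you yourself call the main obstacle --- that vanishing of $g$ on $\Zcal_-(X)$ forces vanishing of the deletion contribution on $\Zcal_-(X\setminus x)$ and of the contraction contribution on $\Zcal_-(X/x)$, with no double counting --- is precisely where the work lies, and it is not carried out; the Poisson-summation observation is never used. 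So the proposal identifies the right kind of argument but, as it stands, does not contain a proof.
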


Let $z \in U$. As usual, the exponential is defined as $e^z:=\sum_{k\ge 0} \frac{z^k}{k!} \in \R[[s_1,\ldots, s_d]]$.
We define the ($z$-shifted) \emph{Todd operator}
 \begin{align}
  \todd(X,z) :=  e^{-z} \prod_{x\in X} \frac{x}{1 - e^{-x}} \in \R[[s_1,\ldots, s_d]].
 \end{align}
The Todd operator can be expressed in terms of   
the \emph{Bernoulli numbers}  $B_0= 1$, $B_1 = - \frac 12 $, $B_2 = \frac 16$,$\ldots $ Recall that they 
are defined by the equation:
\begin{align}
 \frac{s}{e^s-1} = \sum_{k\ge 0} \frac{B_k}{k!} s^k. %
\end{align}
One should note that 
 $e^{z}\frac{z}{e^z-1} = \frac{z}{1-e^{-z}} = \sum_{k\ge 0} \frac{B_k}{k!} (-z)^k$.
For $z\in \Zcal_-(X)$ we can fix a list $S\subseteq X$ \st $z=\sum_{x\in S} x$. Let $T:=X\setminus S$. 
Then we can write the Todd operator as
\begin{align*}
 \todd(X,z) = \prod_{x\in S} \left(\sum_{k\ge 0} \frac{B_k}{k!} x^k\right)
    \prod_{x\in T}\left(\sum_{k\ge 0} \frac{B_k}{k!} (-x)^k\right)
  = 1 + 
  \sum_{x\in T} \frac x2 - \sum_{x\in S} \frac x2   + \ldots
\end{align*}

A sublist $C\subseteq X$ is called a \emph{cocircuit} if  $\rank(X\setminus C) < \rank(X) $ and $C$ is inclusion minimal with this property.
We consider the \emph{cocircuit ideal} 
$\Jcal(X) := \ideal \{ p_C  : C \text{ cocircuit}\} \subseteq \sym(U)$. It is known \cite{dyn-ron-1990, holtz-ron-2011} that 
$\sym(U) = \Pcal(X) \oplus \Jcal(X)$.
Let 
\begin{align}
\label{eq:ProjectionMap}
 \psi_X : \Pcal(X) \oplus \Jcal(X) \to \Pcal(X) 
\end{align}
 denote the projection. 
 Note that this is a graded linear map and that $\psi_X$ maps any homogeneous polynomial  to zero whose degree is at least $N-d+1$.
  This implies that there is a canonical extension 
$ \psi_X : \R[[s_1,\ldots, s_d ]] \to \Pcal(X)$  given by $\psi_X(\sum_i( g_i)) :=
 \sum_i \psi_X(  g_i )$, where $g_i$ denotes a homogeneous polynomial of degree~$i$.
Let
\begin{align}
 f_z = f_z^X :=\psi_X(\todd(X,z)).
\end{align}

\begin{Theorem}[Main Theorem]
\label{Theorem:MainTheorem}
 Let $X\subseteq \Lambda\subseteq U \cong \R^d$ be a list of vectors that is totally unimodular and spans $U$. Let $z$
 be a lattice point in the interior of 
 the zonotope $Z(X)$.
 Then $f_z\in \Pcal_-(X)$, $\todd(X,z)(D)B_X$ extends continuously on $U$, and
 \begin{align}
 \label{eq:MainThm}
    f_z (D)  B_X |_\Lambda = 
    \todd(X,z) (D) B_X |_\Lambda = \delta_z  %
.
 \end{align}
 Here, $f_z$ and $\todd(X,z)$ act on the box spline $B_X$ as  differential operators.
\end{Theorem}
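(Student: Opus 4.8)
The plan is to prove the statement by induction on $N = |X|$, using deletion–contraction (removing a vector $x \in X$ to get $X \setminus x$, contracting along it to get $X/x$). The base case is $X$ a basis of $U$ (i.e. $N = d$), where $B_X$ is the indicator of the half-open parallelepiped $X \cdot [0,1)^N$ up to the normalising constant, $\Zcal_-(X) = \emptyset$, and $\Pcal_-(X) = \{0\}$, so the claim reduces to the known evaluation $\todd(X,z)(D)B_X|_\Lambda = \delta_z$ for the single-simplex / fundamental-domain case — essentially the classical Khovanskii–Pukhlikov computation for the standard simplex, which one verifies directly from $\frac{x}{1-e^{-x}}$ acting on a product of truncated powers.

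The inductive step rests on two parallel recursions. On the analytic side, the box spline satisfies the convolution recursion $B_X = B_{X \setminus x} * B_{(x)}$ (convolution along the line $\R x$), equivalently $B_{X \setminus x}$ is a directional difference/derivative of $B_X$ in direction $x$; correspondingly the Todd operator factors as $\todd(X,z) = \frac{x}{1-e^{-x}} \cdot \todd(X\setminus x, z')$ for an appropriate shift, and $\frac{x}{1-e^{-x}}(D)$ acting on a one-dimensional box spline reproduces the "summation over the segment" needed to pass from $B_{X\setminus x}$ to the value on $B_X$. On the algebraic side, there is a short exact sequence relating $\Pcal(X)$, $\Pcal(X\setminus x)$ and $\Pcal(X/x)$, together with the compatible filtration for $\Pcal_-(X)$; one has to check that $\psi_X(\todd(X,z))$ lands in $\Pcal_-(X)$, which should follow from the characterisation $\Pcal_-(X) = \bigcap_{x} \Pcal(X\setminus x)$ and the fact that the Todd product for $X$, after projection, remains in each $\Pcal(X\setminus x)$ because $z$ is interior (so $z$ stays in the interior of $Z(X\setminus x)$ or moves to a boundary case handled separately). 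The two equalities in \eqref{eq:MainThm} — that replacing $\todd(X,z)$ by its projection $f_z = \psi_X(\todd(X,z))$ does not change the action on $B_X$ — follow because the kernel of $\Pcal(X)$-projection is the cocircuit ideal $\Jcal(X)$, and $\Jcal(X)(D)B_X$ is supported on the boundary of $Z(X)$ (indeed $p_C(D)B_X$ vanishes on $\Lambda$ off the boundary for every cocircuit $C$), so the two differential operators agree as functions on $\Zcal_-(X)$ and hence at the interior lattice point $z$.

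The key technical lemma is the compatibility of the three recursions — box spline convolution, Todd operator factorisation, and $\Pcal$-space exact sequence — at the level of the projection maps $\psi_X$, $\psi_{X \setminus x}$, $\psi_{X/x}$. Concretely: one must show that $\psi_X(\todd(X,z))(D)B_X$, restricted to $\Lambda$, decomposes along $x$ into a contribution computed from $X \setminus x$ (giving, by induction, $\delta_z$ on the appropriate sublattice/slices) plus a correction from $X/x$ that telescopes to zero at every lattice point except $z$. This requires tracking the shift in the "$z$-parameter" under deletion and contraction so that an interior lattice point of $Z(X)$ maps to an interior (or boundary, to be discarded) lattice point of $Z(X\setminus x)$ and of $Z(X/x)$; total unimodularity guarantees all the relevant sublattices are the expected ones.

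The hard part will be the bookkeeping in the inductive step: one needs $x \in X$ chosen so that $X \setminus x$ still spans $U$ (possible since $X$ spans and is not a single basis unless $N=d$), one needs the shifted Todd operators $\todd(X \setminus x, \cdot)$ and $\todd(X/x, \cdot)$ to match what the convolution recursion for $B_X$ produces after applying $\frac{x}{1-e^{-x}}(D)$, and one needs to verify that $\psi$ intertwines these — i.e. that projecting onto $\Pcal(X)$ and then restricting to the $\Pcal(X\setminus x)$ or $\Pcal(X/x)$ pieces agrees with first projecting for the smaller lists. Once this compatibility is in hand, the continuity of $\todd(X,z)(D)B_X$ on $U$ follows from the continuity of $B_{X\setminus x}$-type pieces combined across the deletion, and the membership $f_z \in \Pcal_-(X)$ plus the identity $f_z(D)B_X|_\Lambda = \delta_z$ both drop out of the induction simultaneously.
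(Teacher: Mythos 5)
Your overall shape (deletion--contraction on $X$) matches the paper, but as it stands the plan has two genuine gaps. First, the inductive statement you propose to induct on -- the interior-point statement itself -- does not close under deletion and contraction. The identity $x\,\todd(X\setminus x,z)=\todd(X,z)-\todd(X,z+x)$, and hence $x f_z^{X\setminus x}=f_z^X-f_{z+x}^X$ (Lemma~\ref{Lemma:Injection}), inevitably brings in lattice points such as $z+x$ lying on the boundary of $Z(X)$, and an interior point of $Z(X)$ typically becomes a boundary point of $Z(X\setminus x)$; at such points $f(D)B_X|_\Lambda$ is not even well defined, since derivatives of $B_X$ jump along affine admissible hyperplanes (which pass through interior lattice points as well, not just the boundary). ``Boundary, to be discarded'' cannot be justified inside your framework -- these terms carry real content. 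The paper's remedy is to strengthen the statement: replace $\Zcal_-(X)$ by the shifted set $\Zcal(X,w)$, replace pointwise restriction by the one-sided limits $\lim_w f(D_{\mathrm{pw}})B_X$, run the induction on that version (Theorem~\ref{Theorem:MainTheoremBoundary}) via the isomorphisms $\gamma_X^w$ and the exact commutative diagram of Proposition~\ref{Proposition:ExactSequences}, and only afterwards recover the interior statement. Your base case reflects the same problem: for $X$ a basis, $\Zcal_-(X)=\emptyset$, so the interior-point statement is vacuous rather than ``the classical Khovanskii--Pukhlikov computation''; the paper's induction instead bottoms out at lists of loops and coloops, where $\Zcal(X,w)$ has exactly one point.

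Second, your argument for $f_z\in\Pcal_-(X)$ -- that $\psi_X(\todd(X,z))$ ``remains in each $\Pcal(X\setminus x)$ because $z$ is interior'' -- is an assertion, not a proof; there is no evident reason why the projection onto $\Pcal(X)$ should land in $\Pcal(X\setminus x)$, and the paper explicitly notes this is not obvious and makes a detour. The paper obtains membership only at the end, by invoking the uniqueness part of Theorem~\ref{Theorem:weakHoltzRon} (the interpolation theorem of the earlier paper, which your plan never uses): the unique $q_z^-\in\Pcal_-(X)$ with $q_z^-(D)B_X$ continuous and equal to $\delta_z$ on $\Zcal_-(X)$ must coincide with $f_z$ because $\gamma_X^w$ is injective. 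Two further points you treat as bookkeeping are in fact the technical core: the intertwining of $\psi_X$ with deletion and contraction is proved via the explicit dual bases $Q_B$ and $R^B$ (Remark~\ref{Remark:ProjectionInBasis}, Lemmas~\ref{Lemma:Injection} and~\ref{Lemma:Surjection}), and the statement that $\todd(X,z)$ and $f_z$ act identically on $B_X$ is not because ``$p_C(D)B_X$ vanishes off the boundary'' (imprecise, as above), but because every local piece of $B_X$ lies in $\Dcal(X)$, which is annihilated by the cocircuit ideal $\Jcal(X)$ (Lemma~\ref{Lemma:BoxSplineDiffPspace}).
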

 Dahmen and Micchelli observed that  
 \begin{align}
 \label{eq:DahmenMicchelli}
   T_X = B_X *_d  \Tcal_X := \sum_{\lambda\in \Lambda } B_X(\cdot - \lambda)\Tcal_X(\lambda)  
 \end{align}
 (cf.~\cite[Proposition 17.17]{concini-procesi-book}). 
Using this result, the following variant of the
 Khovanskii-Pukhlikov formula \cite{pukhlikov-khovanski-1992} follows immediately.
 \begin{Corollary}
   \label{Corollary:KhovanskiiPukhlikov}
     Let $X\subseteq \Lambda \subseteq U \cong \R^d$ be a 
      list of vectors that is totally unimodular and spans $U$, $u\in \Lambda$ and 
      $ z \in \Zcal_-(X) $. Then
   \begin{align}
    \abs{\Pi_X(u - z)\cap \Lambda } = 
     \Tcal_X(u - z) = \todd(X,z)(D) T_X(u) = f_z(D)  T_X(u). 
   \end{align}
  \end{Corollary}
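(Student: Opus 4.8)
The plan is to apply the two differential operators $\todd(X,z)(D)$ and $f_z(D)$ to the Dahmen--Micchelli identity \eqref{eq:DahmenMicchelli} and then read off the value at a lattice point using Theorem~\ref{Theorem:MainTheorem}. The first equality $\abs{\Pi_X(u-z)\cap\Lambda}=\Tcal_X(u-z)$ is immediate from the definition of the vector partition function (and $u-z\in\Lambda$ because $z\in\Zcal_-(X)\subseteq\Lambda$), so the real content lies in the second and third equalities.

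Start from $T_X=\sum_{\lambda\in\Lambda}B_X(\cdot-\lambda)\,\Tcal_X(\lambda)$. Because $B_X$ is supported on the bounded zonotope $Z(X)$, this sum is locally finite. On each of the finitely many regions of polynomiality of $B_X$ the box spline agrees with a polynomial of degree at most $N-d$, so there the formal power series $\todd(X,z)(D)$ acts as a finite-order constant-coefficient differential operator (and $f_z(D)$ is such an operator throughout, being a polynomial of bounded degree). Since constant-coefficient differentiation commutes with translations and with locally finite sums, applying $\todd(X,z)(D)$ and $f_z(D)$ to \eqref{eq:DahmenMicchelli} yields
\begin{align*}
 \todd(X,z)(D)\,T_X &= \sum_{\lambda\in\Lambda}\bigl(\todd(X,z)(D)\,B_X\bigr)(\cdot-\lambda)\,\Tcal_X(\lambda),\\
 f_z(D)\,T_X &= \sum_{\lambda\in\Lambda}\bigl(f_z(D)\,B_X\bigr)(\cdot-\lambda)\,\Tcal_X(\lambda).
\end{align*}
By Theorem~\ref{Theorem:MainTheorem} the function $\todd(X,z)(D)B_X$ extends continuously to $U$, and $f_z(D)B_X$ is continuous as well since $f_z\in\Pcal_-(X)$ (cf.\ Theorem~\ref{Theorem:weakHoltzRon}); hence both right-hand sides are locally finite sums of translates of continuous functions, so they are continuous.

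It remains to evaluate at $u\in\Lambda$. For every $\lambda\in\Lambda$ one has $u-\lambda\in\Lambda$, so Theorem~\ref{Theorem:MainTheorem} gives $\bigl(\todd(X,z)(D)B_X\bigr)(u-\lambda)=\bigl(f_z(D)B_X\bigr)(u-\lambda)=\delta_z(u-\lambda)$, which equals $1$ when $\lambda=u-z$ and $0$ otherwise. Thus in each sum only the summand indexed by $\lambda=u-z$ survives, giving $\todd(X,z)(D)T_X(u)=f_z(D)T_X(u)=\Tcal_X(u-z)$, which is what we wanted. The one point that is not purely formal is the justification that the power series $\todd(X,z)$ really defines an operator on the piecewise-polynomial functions $B_X$ and $T_X$ and may be interchanged with the infinite sum in \eqref{eq:DahmenMicchelli}; this is precisely where the local finiteness of the Dahmen--Micchelli sum and the bounded local degree of $B_X$ enter, and with those in hand the argument goes through directly. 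So there is no serious obstacle here: the substance of the corollary is contained in Theorem~\ref{Theorem:MainTheorem}.
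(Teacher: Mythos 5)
Your proposal is correct and follows essentially the same route as the paper, which deduces the corollary ``immediately'' from the Dahmen--Micchelli identity \eqref{eq:DahmenMicchelli} combined with Theorem~\ref{Theorem:MainTheorem}; you have merely made explicit the local finiteness of the sum and the continuity of $\todd(X,z)(D)B_X$ and $f_z(D)B_X$ that justify termwise differentiation and evaluation at lattice points. No gaps.
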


\begin{Remark}
 The box spline $B_X$ is piecewise polynomial. Hence 
 each of its local pieces is smooth but the whole function
 is not smooth where two different regions of 
  polynomiality intersect.
 De Concini, Procesi, and Vergne  \cite{deconcini-procesi-vergne-2010c}
 proved the following \emph{deconvolution formula}, where $B_X$ is replaced by a suitable local piece $p_\cfrak$:
 $\todd(X,0)(D)p_\cfrak  |_\Lambda  = \delta_0$.
 In Section~\ref{Section:ProofIdea} we will explain the choice of the local piece.

 One can deduce from \cite[Remark 3.15]{deconcini-procesi-vergne-2010c}
 that $\todd(X,z)(D) B_X$ can be extended continuously if $z\in \Zcal_-(X)$. 
 It is also not difficult to show that 
 $\todd(X,z) (D)  B_X = f_z (D) B_X$  (see Lemma~\ref{Lemma:BoxSplineDiffPspace}) 
 and that multiplying the Todd operator by $e^{-x}$ corresponds to translating $\todd(X,z) (D)  B_X$ by $x$.
 The novelty of the Main Theorem is that
 the operator $f_z$ for $z\in \Zcal_-(X)$ is shorter than the original Todd operator
 operator (cf.\ Example~\ref{Example:SmoothPolygon}), \ie it is contained in $\Pcal_-(X)$. 
 Furthermore, we provide a new proof for De Concini, Procesi, and Vergne's deconvolution formula.
\end{Remark}
We will also prove a slightly different version  of the Main Theorem 
(Theorem~\ref{Theorem:MainTheoremBoundary}) 
 that only holds for local pieces of the box spline but
 where lattice points in the boundary of the zonotope are permitted  as well.
 This theorem implies the following result. 
   \begin{Corollary}
   \label{Corollary:KV}
     Let $X\subseteq \Lambda \subseteq U \cong \R^d$ be a 
      list of vectors that is totally unimodular and spans $U$ and let $z$ be a lattice point in the zonotope $Z(X)$.
       Let $u\in \Lambda$ and let $\Omega\subseteq \cone(X)$  be a chamber \st $u$ is contained in its closure.
        Let $p_\Omega$ be the polynomial that agrees with $T_X$ on $\Omega$.
      Then
       \begin{align}
    \abs{ \Pi_X(u - z) \cap \Lambda } = \Tcal_X ( u - z ) = \todd(X,z)(D) p_\Omega(u) = f_z(D) p_\Omega(u).
   \end{align}
  \end{Corollary}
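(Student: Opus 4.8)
The strategy is to deduce this from Theorem~\ref{Theorem:MainTheoremBoundary} by feeding it into the Dahmen--Micchelli deconvolution identity~\eqref{eq:DahmenMicchelli}, in the same spirit as the proof of Corollary~\ref{Corollary:KhovanskiiPukhlikov}; the only extra care needed is that for $z\in\partial Z(X)$ the function $f_z(D)B_X$ may fail to be continuous, so one cannot speak of $f_z(D)T_X$ globally and must work throughout with the single local piece $p_\Omega$. Two of the claimed identities are immediate: $\abs{\Pi_X(u-z)\cap\Lambda}=\Tcal_X(u-z)$ is the definition of the vector partition function, and $\todd(X,z)(D)p_\Omega=f_z(D)p_\Omega$ because $\todd(X,z)-f_z=\todd(X,z)-\psi_X(\todd(X,z))$ lies in the cocircuit ideal $\Jcal(X)$ in every degree (note that $\Pcal(X)$ is concentrated in degrees $\le N-d$, so $\sym(U)$ coincides with $\Jcal(X)$ in higher degrees), while $\Jcal(X)$ annihilates every local piece of $T_X$ as a system of constant-coefficient differential operators --- this is essentially the content of Lemma~\ref{Lemma:BoxSplineDiffPspace}. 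So it remains to prove $f_z(D)p_\Omega(u)=\Tcal_X(u-z)$.

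For this, choose a vector $\xi$ pointing from $u$ into the chamber $\Omega$, so that $u+\epsilon\xi\in\Omega$ for all small $\epsilon>0$, and let $\Theta$ be the region of the common refinement of the regions of polynomiality of $T_X$ and of all the (finitely many relevant) translates $B_X(\,\cdot-\lambda)$ that contains $u+\epsilon\xi$ for small $\epsilon>0$; then $\Theta\subseteq\Omega$ and $u\in\overline{\Theta}$. On the full-dimensional open set $\Theta$, the identity~\eqref{eq:DahmenMicchelli} reads $p_\Omega(v)=\sum_\lambda q_\lambda(v-\lambda)\,\Tcal_X(\lambda)$, where $q_\lambda$ is the local piece of $B_X$ on the region of polynomiality containing $\Theta-\lambda$, i.e.\ the piece of $B_X$ at $u-\lambda$ singled out by the direction $\xi$; matching polynomials on the open set $\Theta$ turns this into a polynomial identity valid everywhere. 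Applying the constant-coefficient operator $f_z(D)$, which commutes with translations, and evaluating at the lattice point $u$ yields
\begin{align*}
 f_z(D)p_\Omega(u) \;=\; \sum_\lambda \bigl(f_z(D)q_\lambda\bigr)(u-\lambda)\,\Tcal_X(\lambda).
\end{align*}
Each $u-\lambda$ is a lattice point, and by Theorem~\ref{Theorem:MainTheoremBoundary} --- applied with $z\in Z(X)\cap\Lambda$ and the box-spline piece $q_\lambda$, which is exactly the local piece determined by the choice of direction $\xi$ --- we get $(f_z(D)q_\lambda)(u-\lambda)=\delta_z(u-\lambda)$, which is $1$ if $\lambda=u-z$ and $0$ otherwise. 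Hence $f_z(D)p_\Omega(u)=\Tcal_X(u-z)$, completing the proof.

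The genuinely delicate point --- and the reason Theorem~\ref{Theorem:MainTheoremBoundary} is invoked rather than the Main Theorem --- is the identification of the box-spline pieces $q_\lambda$ occurring in the convolution near the various lattice points $u-\lambda$ with exactly the \emph{suitable} local pieces to which Theorem~\ref{Theorem:MainTheoremBoundary} applies. This is where the hypothesis that $u$ lies in the closure of one fixed chamber $\Omega$ enters: it provides a single regular direction $\xi$ that pins down all the $q_\lambda$ coherently, so that the shifted Todd-operator deconvolution formula applies uniformly over the finitely many $\lambda$ with $\Tcal_X(\lambda)\neq 0$ and $u-\lambda$ in the support of $B_X$. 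The precise recipe for the local piece is the one discussed in Section~\ref{Section:ProofIdea}.
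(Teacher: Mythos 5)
Your route is the intended one: the paper gives no written proof of this corollary beyond the remark that Theorem~\ref{Theorem:MainTheoremBoundary} implies it, and combining that theorem with the convolution identity \eqref{eq:DahmenMicchelli} on a common refinement near $u$, together with the observation that $\todd(X,z)-f_z$ has all homogeneous components in $\Jcal(X)$ and hence annihilates the local pieces of $T_X$ (which lie in $\Dcal(X)$, being finite sums of translates of local pieces of $B_X$), is exactly the expected argument. The gap is in the decisive step. Theorem~\ref{Theorem:MainTheoremBoundary} is not available for an arbitrary $z\in Z(X)\cap\Lambda$ and an arbitrary short affine regular direction: it requires $z\in\Zcal(X,w)$, \ie $z+w\in Z(X)$, for the very direction $w$ used to select the local pieces. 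In your argument the direction $\xi$ is forced by the pair $(u,\Omega)$, since it must satisfy $u+\eps\xi\in\Omega$, and nothing in the hypotheses guarantees $z+\eps\xi\in Z(X)$ when $z$ lies on the boundary of the zonotope. Your final paragraph asserts that the single direction $\xi$ pins down all the $q_\lambda$ coherently so that the shifted deconvolution formula applies uniformly; coherence of the $q_\lambda$ is indeed automatic, but the hypothesis $z\in\Zcal(X,\xi)$ is not, and it is precisely what you never verify.

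The gap is not cosmetic: when the inward direction at $u$ into $\Omega$ is not an inward direction for $z$ into $Z(X)$, the chain of equalities itself breaks, so no rearrangement of the same argument can close it without an extra compatibility assumption. Take $X=(1,1)\subseteq\Z^1$, so $Z(X)=[0,2]$, $\cone(X)=[0,\infty)$ with unique chamber $\Omega=(0,\infty)$ and $p_\Omega(t)=t$; let $z=2$ (a boundary lattice point, allowed by the statement) and $u=0\in\clos\Omega$. Then $\todd(X,2)=\bigl(s/(e^s-1)\bigr)^2$, $f_2=\psi_X(\todd(X,2))=1-s$, and $\todd(X,2)(D)p_\Omega(0)=f_2(D)p_\Omega(0)=-1$, whereas $\Tcal_X(u-z)=\Tcal_X(-2)=0$. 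Here every $\xi$ with $u+\eps\xi\in\Omega$ is positive, while Theorem~\ref{Theorem:MainTheoremBoundary} with such a $w$ yields $\delta_z$ only for $z\in\Zcal(X,w)=\{0,1\}$, not for $z=2$. Your proof (and the corollary in the generality you claim) needs the additional hypothesis that there exists a short affine regular $w$ with $u+\eps w\in\Omega$ and $z+w\in Z(X)$; this holds automatically when $z\in\Zcal_-(X)$, or when $u$ lies in the open chamber $\Omega$, and with it the remainder of your argument --- the polynomial identity on $\Theta$, translation invariance of the constant-coefficient operator $f_z(D)$, and the annihilation of $p_\Omega$ by $\Jcal(X)$ --- is correct and matches the paper's intent.
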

The original Khovanskii-Pukhlikov formula is the case $z=0$ in Corollary~\ref{Corollary:KV}. %
For more information on this formula, see Vergne's survey article on integral points in polytopes \cite{vergne-2003}. 
 An explanation of the Khovanskii-Pukhlikov formula that is easy to read is contained 
  in the book by Beck and Robins 
  \cite[Chapter 10]{beck-robins-ComputingTheContinuousDiscretely}.

  \begin{Corollary}
  \label{Corollary:DahmenMicchelli}
     Let $X\subseteq \Lambda \subseteq U \cong \R^d$ be a list of vectors that is totally unimodular and spans $U$.
  Then 
  \begin{align}
    \sum_{z \in \Zcal_-(X)} B_X(z) f_z = 1. 
  \end{align}
   This implies formula \eqref{eq:DahmenMicchelli}.
 \end{Corollary}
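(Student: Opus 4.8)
The plan is to deduce the polynomial identity $\sum_{z\in\Zcal_-(X)}B_X(z)f_z=1$ from the \emph{uniqueness} assertion of Theorem~\ref{Theorem:weakHoltzRon}, and then to read off~\eqref{eq:DahmenMicchelli} by combining that identity with Corollary~\ref{Corollary:KhovanskiiPukhlikov}. Put $g:=\sum_{z\in\Zcal_-(X)}B_X(z)f_z$; since $Z(X)$ is bounded, $\Zcal_-(X)$ is finite and $g$ is a polynomial. The key observation is that $g$ and the constant polynomial $1$ are \emph{both} admissible solutions of the interpolation problem of Theorem~\ref{Theorem:weakHoltzRon} for the datum $f=B_X|_{\Zcal_-(X)}$ --- note that this datum is $B_X|_{\Zcal_-(X)}$ and not the constant function, which in general does \emph{not} arise from the polynomial~$1$ --- so uniqueness forces $g=1$.

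To carry this out I would first check that both candidates lie in $\Pcal_-(X)$: each $f_z$ does by the Main Theorem and $\Pcal_-(X)$ is a linear subspace of $\sym(U)$, so $g\in\Pcal_-(X)$; and $1=p_\emptyset\in\Pcal_-(X)$, because the empty sublist is admissible in the definition of each $\Pcal(X\setminus x)$. Next, $g(D)B_X=\sum_{z\in\Zcal_-(X)}B_X(z)\,f_z(D)B_X$ is a finite linear combination of functions that extend continuously (each $f_z(D)B_X$ does, by the Main Theorem together with Lemma~\ref{Lemma:BoxSplineDiffPspace}), hence extends continuously, while $1(D)B_X=B_X$ is continuous. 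Finally, for each $z'\in\Zcal_-(X)\subseteq\Lambda$ the Main Theorem gives $(f_z(D)B_X)(z')=\delta_z(z')$, so
\begin{align*}
(g(D)B_X)(z')=\sum_{z\in\Zcal_-(X)}B_X(z)\,(f_z(D)B_X)(z')=\sum_{z\in\Zcal_-(X)}B_X(z)\,\delta_z(z')=B_X(z')=(1(D)B_X)(z').
\end{align*}
Thus $g$ and $1$ both lie in $\Pcal_-(X)$ and map, under $p\mapsto p(D)B_X$, to continuous functions that restrict to $B_X|_{\Zcal_-(X)}$ on $\Zcal_-(X)$; by the uniqueness in Theorem~\ref{Theorem:weakHoltzRon}, $g=1$.

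It remains to obtain~\eqref{eq:DahmenMicchelli}. Combining $g=1$ with the Main Theorem once more yields $B_X|_\Lambda=(g(D)B_X)|_\Lambda=\sum_{z\in\Zcal_-(X)}B_X(z)\,\delta_z$, so $B_X(\lambda)=0$ for every lattice point $\lambda\notin\Zcal_-(X)$ (\ie $B_X$ vanishes at the lattice points in $\partial Z(X)$; this also follows from continuity of $B_X$ together with $\supp B_X\subseteq Z(X)$). Now fix $u\in\Lambda$. By Corollary~\ref{Corollary:KhovanskiiPukhlikov}, $f_z(D)T_X(u)=\Tcal_X(u-z)$ for each $z\in\Zcal_-(X)$, whence
\begin{align*}
T_X(u)=1(D)T_X(u)=\left(\sum_{z\in\Zcal_-(X)}B_X(z)f_z\right)(D)\,T_X(u)=\sum_{z\in\Zcal_-(X)}B_X(z)\,\Tcal_X(u-z)=\sum_{\lambda\in\Lambda}B_X(\lambda)\,\Tcal_X(u-\lambda),
\end{align*}
the last equality being valid because the added terms vanish; after the substitution $\lambda\mapsto u-\lambda$ this reads $T_X(u)=(B_X*_d\Tcal_X)(u)$ for every $u\in\Lambda$. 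The upgrade to the stated identity of functions on $U$ is then routine: both sides are continuous and piecewise polynomial, and on each chamber $T_X$ agrees with a fixed polynomial which, by Zariski-density of the lattice points contained in that (full-dimensional, rational) chamber, is determined by the lattice-point identity just proved.

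There is no genuinely hard step here: once the Main Theorem is granted, the substantive first assertion is immediate the moment one spots that the datum to feed into Theorem~\ref{Theorem:weakHoltzRon} is $B_X|_{\Zcal_-(X)}$ (the only external input being the continuity of the box spline, valid under the standing hypotheses), and the second assertion is bookkeeping with Corollary~\ref{Corollary:KhovanskiiPukhlikov}. The one place that deserves a word is the final passage from the lattice-point identity to the functional identity~\eqref{eq:DahmenMicchelli}; since~\eqref{eq:DahmenMicchelli} is in any case the classical Dahmen--Micchelli formula, this corollary is best viewed as showing that our identity $\sum_z B_X(z)f_z=1$ repackages it.
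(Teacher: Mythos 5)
Your argument is correct and essentially identical to the paper's: the paper likewise sets $p=\sum_{z\in\Zcal_-(X)}B_X(z)f_z$, observes via the Main Theorem that $p(D)B_X$ and $1(D)B_X=B_X$ agree on $\Zcal_-(X)$, invokes the uniqueness part of Theorem~\ref{Theorem:weakHoltzRon} to conclude $p=1$, and then deduces \eqref{eq:DahmenMicchelli} by exactly your chain with Corollary~\ref{Corollary:KhovanskiiPukhlikov}, extending the sum over all of $\Lambda$ because $B_X$ vanishes at the remaining lattice points. The only point where you go beyond the paper is the final lattice-to-functional upgrade, and there the Zariski-density argument is not quite watertight as stated (the right-hand side of \eqref{eq:DahmenMicchelli} is a priori only alcove-wise polynomial on a chamber, so agreement at lattice points does not by itself force equality off the lattice); the paper's own computation likewise only treats $u\in\Lambda$, so this does not separate your proof from the paper's, and your closing remark that the corollary repackages the classical Dahmen--Micchelli identity matches the paper's standpoint.
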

The central $\Pcal$-space and various other generalised $\Pcal$-spaces have a canonical basis \cite{holtz-ron-2011,lenz-forward-2012}.
Up to now, no general construction for a basis of the internal space  $\Pcal_-(X)$
  was known (cf.~\cite{ardila-postnikov-errata-2012, holtz-ron-2011, lenz-hzpi-2012}). The polynomials $f_z$ form such a basis.
  \begin{Corollary} 
  \label{Corollary:InternalPbasis}
   Let $X\subseteq \Lambda \subseteq U \cong \R^d$ be a list of vectors that is totally unimodular
     and spans $U$.
  Then  $\{ f_z : z \in \Zcal_-(X) \}$ is a basis for $\Pcal_-(X)$.
  \end{Corollary}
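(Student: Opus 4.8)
The plan is to read off Corollary~\ref{Corollary:InternalPbasis} directly from the Main Theorem (Theorem~\ref{Theorem:MainTheorem}), using as the only additional ingredient the equality $\dim\Pcal_-(X) = \abs{\Zcal_-(X)}$ established by Holtz and Ron \cite{holtz-ron-2011}. By Theorem~\ref{Theorem:MainTheorem}, every $f_z$ with $z\in\Zcal_-(X)$ lies in $\Pcal_-(X)$, so it suffices to show that the family $(f_z)_{z\in\Zcal_-(X)}$ is linearly independent. Indeed, a linearly independent family necessarily consists of pairwise distinct vectors, and a linearly independent set of $\abs{\Zcal_-(X)} = \dim\Pcal_-(X)$ vectors in $\Pcal_-(X)$ is automatically a basis.

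To prove linear independence, I would apply the linear operator $q \mapsto (q(D)B_X)|_\Lambda$. This is well defined on the linear span of the $f_z$ because, by Theorem~\ref{Theorem:MainTheorem}, each $f_z(D)B_X$ extends to a continuous function on $U$, so finite linear combinations cause no trouble. Suppose $\sum_{z\in\Zcal_-(X)} c_z f_z = 0$ in $\sym(U)$. Since $q\mapsto q(D)B_X$ is linear in $q$ and restriction commutes with finite sums,
\begin{align}
 0 \;=\; \left.\left(\sum_{z\in\Zcal_-(X)} c_z f_z\right)(D)B_X\,\right|_\Lambda \;=\; \left.\sum_{z\in\Zcal_-(X)} c_z\, f_z(D)B_X\,\right|_\Lambda \;=\; \sum_{z\in\Zcal_-(X)} c_z\,\delta_z,
\end{align}
where the last equality uses \eqref{eq:MainThm}. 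Evaluating at an arbitrary point $z_0\in\Zcal_-(X)\subseteq\Lambda$ gives $c_{z_0}=0$, so all coefficients vanish and $\{f_z : z\in\Zcal_-(X)\}$ is a basis of $\Pcal_-(X)$.

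There is no real obstacle here; all the substance is already packed into the Main Theorem, and the only point meriting a word is the well-definedness just mentioned (equivalently, that $p(D)B_X$ is continuous for \emph{every} $p\in\Pcal_-(X)$, which in any case follows a posteriori once the $f_z$ are known to span). I would also point out that this argument simultaneously recovers the bijectivity part of Theorem~\ref{Theorem:weakHoltzRon}: the linear map $\Pcal_-(X)\to\R^{\Zcal_-(X)}$, $p\mapsto(p(D)B_X)|_{\Zcal_-(X)}$, carries the basis $(f_z)_{z\in\Zcal_-(X)}$ to the standard basis of the function space $\R^{\Zcal_-(X)}$, hence is an isomorphism, and its inverse exhibits the interpolant of a prescribed function $f$ on $\Zcal_-(X)$ explicitly as $\sum_{z\in\Zcal_-(X)} f(z)\,f_z$.
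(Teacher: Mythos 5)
Your proposal is correct and follows essentially the same route as the paper: membership in $\Pcal_-(X)$ and the interpolation property $f_z(D)B_X|_\Lambda=\delta_z$ from the Main Theorem give linear independence, and the dimension count $\dim\Pcal_-(X)=\abs{\Zcal_-(X)}$ (Theorem~\ref{Theorem:weakHoltzRon}, or Holtz--Ron) upgrades this to a basis. You merely spell out the independence argument and the well-definedness of $q\mapsto q(D)B_X|_\Lambda$ on the span, which the paper leaves implicit.
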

We also obtain a new basis for the central space $\Pcal(X)$. Let
 $w\in U$ be a \emph{short affine regular vector}, \ie a vector whose Euclidian length 
  is close to zero that is not contained in any hyperplane generated by sublists of $X$.
 Let 
   $\Zcal(X,w) := (Z(X) - w ) \cap \Lambda$ (see Figure~\ref{Figure:ZonotopeDelCon} for an example).
   It is known that $\dim \Pcal(X) = \abs{\Zcal(X,w)} = \vol(Z(X))$ \cite{holtz-ron-2011}.
  \begin{Corollary}
  \label{Corollary:newPbasis}
 Let $X\subseteq \Lambda \subseteq U\cong \R^d$ be a list of vectors that is totally unimodular and spans $U$. 
 Then
  $\{ f_z : z \in \Zcal(X,w)  \}$ is a basis for $\Pcal(X)$.
  \end{Corollary}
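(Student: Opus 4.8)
The plan is to mimic the strategy that presumably yields Corollary~\ref{Corollary:InternalPbasis}, transferring it from the internal setting to the central one via the multivariate spline $T_X$ in place of the box spline $B_X$. First I would recall that $\dim\Pcal(X)=\abs{\Zcal(X,w)}=\vol(Z(X))$ by Holtz--Ron, so it suffices to show that the set $\{f_z : z\in\Zcal(X,w)\}$ is linearly independent (spanning then follows by the dimension count). Each $f_z=\psi_X(\todd(X,z))$ lies in $\Pcal(X)$ by definition of the extended projection $\psi_X$, so the family is at least a subset of $\Pcal(X)$ of the right cardinality.

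For the independence, I would use Corollary~\ref{Corollary:KV}: for a fixed chamber $\Omega$ and $u\in\Lambda\cap\clos\Omega$, one has $f_z(D)p_\Omega(u)=\Tcal_X(u-z)$. Suppose $\sum_{z\in\Zcal(X,w)}c_z f_z=0$ in $\Pcal(X)$. Applying the differential operator identity, for every $u\in\Lambda$ lying in the closure of a suitable chamber we get $\sum_z c_z\,\Tcal_X(u-z)=0$. The key combinatorial point is then that the ``translated-zonotope'' lattice set $\Zcal(X,w)$ is exactly a system of representatives for which the matrix $\bigl(\Tcal_X(u-z)\bigr)_{u,z}$, with $u$ ranging over an appropriate finite set of lattice points (for instance $\Zcal(X,w)$ itself, or the lattice points near a vertex of the relevant chamber), is triangular with nonzero diagonal. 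Concretely, $\Tcal_X(u-z)=1$ when $u-z$ is a vertex of $\cone(X)$-type region where $\Pi_X$ degenerates to a point, and vanishes for $u-z$ outside $\cone(X)$; ordering the $z$'s by a generic linear functional makes the system unitriangular, forcing all $c_z=0$.

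The main obstacle I expect is pinning down the precise finite evaluation set and the triangularity claim: one must argue that for $z\in\Zcal(X,w)$ the shifted points $u-z$ can be arranged so that $\Tcal_X(u-z)$ is supported on $\{u-z\in\cone(X)\}$ in a way compatible with a linear order, and that the ``diagonal'' entries are genuinely nonzero (this uses that $w$ is short and affine regular, so no $z-z'$ with $z,z'\in\Zcal(X,w)$ lies on a wall, and the deconvolution/Todd identity controls the boundary behaviour). Once the unitriangularity is established, linear independence is immediate, and combining with $\abs{\Zcal(X,w)}=\dim\Pcal(X)$ gives that $\{f_z : z\in\Zcal(X,w)\}$ is a basis. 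An alternative, possibly cleaner route would be to invoke Corollary~\ref{Corollary:DahmenMicchelli}-type deconvolution at the level of $T_X$: since $\todd(X,z)(D)$ inverts convolution with $B_X$ and the $f_z$ reproduce $\delta_z$ on $\Zcal(X,w)$ against the relevant local piece $p_\Omega$, any linear dependence among the $f_z$ would produce a nonzero function on the lattice that is annihilated by all these operators, contradicting the reproducing property; I would present whichever of the two arguments is shorter in the final write-up.
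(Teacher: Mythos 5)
Your route is genuinely different from the paper's, and it has a real gap in generality. You build everything on the vector partition function $\Tcal_X$ and on Corollary~\ref{Corollary:KV}, i.e.\ on the identity $f_z(D)p_\Omega(u)=\Tcal_X(u-z)$, together with a triangularity argument for the matrix $\bigl(\Tcal_X(u-z)\bigr)$. But $\Tcal_X$ and $T_X$ are only defined when $0\notin\conv(X)$ (otherwise $\Pi_X(u)$ is an unbounded polyhedron, as the paper points out right after their definition), and Corollary~\ref{Corollary:KV} carries that implicit hypothesis, whereas Corollary~\ref{Corollary:newPbasis} is stated for an arbitrary totally unimodular spanning list. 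For instance $X=(1,-1)\subseteq\Z^1$, or $X=\bigl((1,0),(0,1),(-1,-1)\bigr)\subseteq\Z^2$, are totally unimodular and spanning, $\Pcal(X)$ and $\Zcal(X,w)$ are perfectly well defined, but your proof has nothing to evaluate against: there is no $\Tcal_X$, no pointed $\cone(X)$, and no linear functional positive on the cone with which to order the $z$'s. Even in the pointed case, the part you yourself flag as the main obstacle (choice of evaluation set, genericity of the ordering, nonvanishing of the diagonal entries) is exactly the content you would still have to supply; it can be made to work there (take $u=z_i$, order by a functional strictly positive on $\cone(X)\setminus\{0\}$, use $\Tcal_X(0)=1$ and $\supp\Tcal_X\subseteq\cone(X)$), but it does not cover the stated corollary.

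The paper's own proof is much shorter and avoids the partition function entirely: by Proposition~\ref{Proposition:ExactSequences} the map $\gamma_X^w:\Pcal(X)\to\Xi^w(X)$, $p\mapsto\lim_w p(D_{\mathrm{pw}})B_X|_\Lambda$, is an isomorphism, and by Lemma~\ref{Lemma:qequalsf} one has $f_z=q_z^w=(\gamma_X^w)^{-1}(\delta_z)$ for $z\in\Zcal(X,w)$; since the delta functions $\{\delta_z: z\in\Zcal(X,w)\}$ form a basis of $\Xi^w(X)$, the $f_z$ form a basis of $\Pcal(X)$. This argument needs no assumption on $\cone(X)$, no separate dimension count (it is built into the isomorphism), and is logically upstream of Corollary~\ref{Corollary:KV}, which in the paper is itself a consequence of the same two results via Theorem~\ref{Theorem:MainTheoremBoundary}. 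If you want to salvage your approach, you would have to either add the hypothesis $0\notin\conv(X)$ (weakening the statement) or replace $\Tcal_X$ by the limit evaluations $\lim_w f_z(D_{\mathrm{pw}})B_X$ of Theorem~\ref{Theorem:MainTheoremBoundary} --- at which point you are back to the paper's argument.
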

  This corollary will be used to prove the following new characterisation of the internal space $\Pcal_-(X)$.
  \begin{Corollary}
  \label{Corollary:interalPcontinuous}
  Let $X\subseteq \Lambda \subseteq U\cong \R^d$ be a list of vectors that is totally unimodular and spans $U$. 
  Then
   \begin{align}
    \Pcal_-(X) &= \{ f\in \Pcal(X) %
      : f(D) B_X \text{ is a continuous function} \}.
   \end{align}
  \end{Corollary}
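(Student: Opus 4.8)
The plan is to prove the two inclusions $\Pcal_-(X)\subseteq\Ccal$ and $\Ccal\subseteq\Pcal_-(X)$ separately, where I write $\Ccal:=\{f\in\Pcal(X):f(D)B_X\text{ is continuous}\}$ for the right-hand side. For the first inclusion I would begin by noting that $\Pcal_-(X)\subseteq\Pcal(X)$: taking the short affine regular vector $w$ short enough, every point of $\Zcal_-(X)$ lies in the interior of $Z(X)$, hence in $Z(X)-w$, so $\Zcal_-(X)\subseteq\Zcal(X,w)$, and Corollaries~\ref{Corollary:InternalPbasis} and \ref{Corollary:newPbasis} then give $\Pcal_-(X)=\spa\{f_z:z\in\Zcal_-(X)\}\subseteq\spa\{f_z:z\in\Zcal(X,w)\}=\Pcal(X)$. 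Now, given $f\in\Pcal_-(X)$, expand $f=\sum_{z\in\Zcal_-(X)}c_zf_z$ by Corollary~\ref{Corollary:InternalPbasis}. By the Main Theorem each $\todd(X,z)(D)B_X$ extends to a continuous function on $U$, and $f_z(D)B_X=\todd(X,z)(D)B_X$ by Lemma~\ref{Lemma:BoxSplineDiffPspace}; hence $f(D)B_X=\sum_zc_zf_z(D)B_X$ is continuous, i.e.\ $f\in\Ccal$.

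For the reverse inclusion I would first reduce to a cleaner statement. Let $f\in\Ccal$, put $g:=f(D)B_X$ and $h:=g|_{\Zcal_-(X)}$. By Theorem~\ref{Theorem:weakHoltzRon} (or explicitly $p:=\sum_{z\in\Zcal_-(X)}h(z)f_z$ via the Main Theorem) there is a $p\in\Pcal_-(X)$ with $p(D)B_X$ continuous and $(p(D)B_X)|_{\Zcal_-(X)}=h$. Set $q:=f-p$, which lies in $\Pcal(X)$ because $\Pcal_-(X)\subseteq\Pcal(X)$. Then $q(D)B_X=g-p(D)B_X$ is a continuous function that vanishes on $\Zcal_-(X)$, and it suffices to show that any such $q$ is zero; this gives $f=p\in\Pcal_-(X)$.

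The main step is thus: if $q\in\Pcal(X)$ and $q(D)B_X$ is continuous and vanishes on $\Zcal_-(X)$, then $q=0$. Since $q(D)B_X$ is continuous and supported on the zonotope, it vanishes on the boundary $\partial Z(X)$ and outside $Z(X)$; combined with the hypothesis on $\Zcal_-(X)=\interior(Z(X))\cap\Lambda$ this shows that the continuous function $q(D)B_X$ vanishes on all of $\Lambda$. Differentiating the Dahmen--Micchelli identity $T_X=B_X*_d\Tcal_X$ of \eqref{eq:DahmenMicchelli} term by term — legitimate because $q(D)B_X$ is a genuine continuous function, so there are no distributional corrections — gives $q(D)T_X(u)=\sum_{\lambda\in\Lambda}(q(D)B_X)(u-\lambda)\,\Tcal_X(\lambda)$ as continuous functions, a locally finite sum since $B_X$ has compact support. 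Evaluating at $u\in\Lambda$ makes every argument $u-\lambda$ a lattice point, so $q(D)T_X$ vanishes on $\Lambda$. On each chamber $\Omega$ of $\cone(X)$ the spline $T_X$ agrees with a polynomial $p_\Omega$, hence $q(D)T_X$ agrees there with $q(D)p_\Omega$; as $\Lambda\cap\Omega$ is Zariski dense in $U$ (an open cone contains arbitrarily large translates of a fixed box), the polynomial $q(D)p_\Omega$ vanishes identically, for every $\Omega$. Finally, the polynomials $p_\Omega$ span the Dahmen--Micchelli space $\Dcal(X)\subseteq\sym(V)$, which is nondegenerately paired with $\Pcal(X)$ through $(a,b)\mapsto(a(D)b)(0)$ (a standard fact of zonotopal/spline theory, see e.g.\ \cite{concini-procesi-book,holtz-ron-2011}); since $q(D)$ annihilates every $p_\Omega$, hence all of $\Dcal(X)$, nondegeneracy forces $q=0$.

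The hard part is precisely this last step, and it is where the continuity hypothesis does the real work: it is what allows one to push the vanishing of $q(D)B_X$ onto $\partial Z(X)$ and therefore onto all of $\Lambda$, and it is what makes the term-by-term differentiation of $B_X*_d\Tcal_X$ valid. The detour through $T_X$ is needed because the local pieces of $B_X$ live on bounded cells that may contain no lattice points, whereas the chambers of $\cone(X)$ are full-dimensional cones; once there, the $\Pcal(X)$--$\Dcal(X)$ duality finishes the argument. One should double-check that the distributional subtleties in ``$q(D)(B_X*_d\Tcal_X)=\sum_\lambda(q(D)B_X)(\cdot-\lambda)\Tcal_X(\lambda)$'' genuinely disappear under the continuity assumption on $q(D)B_X$; I expect this to be routine but it is the point deserving the most care.
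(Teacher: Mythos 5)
Your ``$\subseteq$'' direction is fine and essentially what the paper does (the paper just quotes Theorem~\ref{Theorem:weakHoltzRon}; your expansion in the basis $\{f_z: z\in\Zcal_-(X)\}$ plus the Main Theorem amounts to the same). For ``$\supseteq$'' you genuinely diverge from the paper: the paper writes $p=\sum_{z\in\Zcal(X,w)}\lambda_z f_z$ via Corollary~\ref{Corollary:newPbasis} and kills each coefficient at $z\in\Zcal(X,w)\setminus\Zcal_-(X)$ by comparing the two one-sided limits of $p(D_{\mathrm{pw}})B_X$ at $z$: Theorem~\ref{Theorem:MainTheoremBoundary} gives $\lim_w p(D_{\mathrm{pw}})B_X(z)=\lambda_z$, while $z-w\notin Z(X)$ gives $\lim_{-w}p(D_{\mathrm{pw}})B_X(z)=0$, and continuity identifies the two. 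Your detour through $T_X$, the identity \eqref{eq:DahmenMicchelli}, Zariski density of lattice points in a chamber, and $\Pcal$--$\Dcal$ duality avoids Theorem~\ref{Theorem:MainTheoremBoundary} entirely; it is a legitimate alternative but longer, and as written it has two genuine gaps.

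First, the claim that the local pieces $p_\Omega$ of $T_X$ span $\Dcal(X)$ is false: $T_X$ is positively homogeneous of degree $N-d$, so every $p_\Omega$ is a homogeneous polynomial of that degree, whereas $\Dcal(X)$ contains the constants. What you need (and what is true) is that the $p_\Omega$ \emph{together with their partial derivatives} span $\Dcal(X)$; this follows, e.g., from the standard identity $B_X(u)=\sum_{S\subseteq X}(-1)^{\abs{S}}\,T_X\bigl(u-\sum_{x\in S}x\bigr)$, which exhibits each local piece of $B_X$ as a signed sum of translates of the $p_\Omega$. Your argument survives this correction, because $q(D)p_\Omega\equiv 0$ identically forces $q(D)$ to annihilate all derivatives and translates of $p_\Omega$, hence all of $\Dcal(X)$, and then Proposition~\ref{Proposition:PDduality} gives $q=0$ --- but the corrected spanning statement must be invoked. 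Second, your whole route uses $T_X$ and $\Tcal_X$, which, as the paper notes, are only defined when $0\notin\conv(X)$, while the corollary has no such hypothesis; you need a reduction (delete loops and flip signs of some vectors to put $X$ in an open half-space, which translates $B_X$ and leaves $\Pcal(X)$, $\Pcal_-(X)$ and the continuity condition unchanged) or you must restrict the claim. The remaining delicate points you flagged yourself can indeed be handled: work chamber by chamber, noting that $q(D)p_\Omega$ and the locally finite sum $\sum_{\lambda}(q(D)B_X)(\cdot-\lambda)\Tcal_X(\lambda)$ are both continuous on $\clos\Omega$ and agree off the affine admissible hyperplanes, hence everywhere on $\clos\Omega$; evaluating at lattice points (where the naive piecewise derivative of $T_X$ is not defined) is then unambiguous.
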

\begin{Remark}
 There is a related result due to Dahmen-Micchelli (\cite{dahmen-micchelli-1985} or \cite[Theorem 13.21]{concini-procesi-book}): 
  for every function $f$ on %
   $\Zcal(X,w)$
  there exists a unique  
  function in $\DM(X)$ that agrees with $f$ on $\Zcal(X,w)$. Here,  $\DM(X)$ denotes the so-called discrete Dahmen-Micchelli space.
  The proof of the deconvolution formula in \cite{deconcini-procesi-vergne-2010c} relies on this result.
\end{Remark}
  
\subsection*{Organisation of the article}
The remainder of this article is organised as follows:
in Section~\ref{Section:ProofIdea}, we will first define deletion and contraction.
Then we will describe a method to make sense of derivatives of piecewise polynomial functions via limits and state a different version of the Main Theorem. 

In Section~\ref{Section:Examples} we will give some examples.
In Section~\ref{Section:ZonotopalAlgebra} we will recall some 
facts about zonotopal algebra, \ie about the space $\Pcal(X)$, the dual space $\Dcal(X)$
 and their connection with splines.
These will be  needed in the proof of the Main Theorem and its corollaries in Section~\ref{Section:TechnicalDetails}.
In %
 the appendix we will give an alternative proof of %
 the Main Theorem 
  in the univariate case that uses residues.  %

\subsection*{Acknowledgements}
The author would like to thank Mich\`ele Vergne for her comments on \cite{lenz-interpolation-2012} which lead to the present article.
 Some examples where calculated using the Sage Mathematics Software System \cite{sage-56}.

%
%
%
%

\section{Deletion-contraction, limits, and the extended Main Theorem}
\label{Section:ProofIdea}
In the first subsection, we will first define deletion and contraction and discuss the idea of 
the deletion-contraction proof of the Main Theorem. 
In the second subsection, 
 we will describe a method to make sense of derivatives of piecewise polynomial functions 
 via limits and then state a different version of the Main Theorem.

\subsection{Deletion and contraction}
Let $x\in X$. We call the list $X\setminus x$ the \emph{deletion} of $x$. 
 The image of $X\setminus x$ under the canonical projection $\pi_x : U \to U/\spa(x) =:U/x$ 
 is called the \emph{contraction} of $x$. It is denoted by $X/x$. %

The projection $\pi_x$ induces a map $ \sym(U)\to \sym(U/x)$ that we will also denote by $\pi_x$.
 If we identify $\sym(U)$ with the polynomial ring $\R[s_1,\ldots,s_d]$ 
  and $x=s_d$, then
  $\pi_x$ is the map from 
 $\R[s_1,\ldots, s_{d}]$ to $\R[s_1,\ldots, s_{d-1}]$ that sends $s_d$ to zero and $s_1,\ldots, s_{d-1}$ to themselves.
 The space $\Pcal(X/x)$ is contained in the symmetric algebra $\sym(U/x)$. 

 Note that since $X$ is totally unimodular,  $\Lambda/x \subseteq U/x$ is a lattice for every $x\in X$ and $X/x$ is  totally unimodular 
  with respect to this lattice. %

 Let $x\in X$. Using matroid theory terminology, we call $x$ a \emph{loop} if $x = 0$ and we call $x$ a \emph{coloop}
  if $\rank(X\setminus x) < \rank(X)$.
  \smallskip

 Recall that we defined $f_z = \psi_X(\todd(X,z))$ for $z \in \Zcal_-(X)$. 
By Theorem~\ref{Theorem:weakHoltzRon},
there is a unique polynomial $q_z^X = q_z\in \Pcal_-(X)$  \st
  $q_z (D) B_X |_\Lambda =\delta_{z}$  for every
  $z\in \Zcal_-(X)$. %
 In order to prove the Main Theorem it is sufficient to show that $f_z = q_z$.
 In fact,
  $q_z$ and $f_z$ behave in the same way under deletion and contraction:
 they both satisfy the equalities
 $x q_z^{X \setminus x} = q_z^X - q_{z+x}^X$ and $\pi_x(q_z^X) = q_{\bar z}^{X/x}$.
  Unfortunately, it is not obvious that $f_z\in \Pcal_-(X)$.
 Therefore, we have to make a detour.
Since
$\Pcal_-(X)$ is in general not spanned by polynomials of type $p_Y$ for some $Y\subseteq X$
(cf.\ \cite{ardila-postnikov-errata-2012}), it is quite difficult to handle this space.
The space $\Pcal(X)$ on the other hand has a basis which is very convenient for deletion-contraction
(cf.\ Proposition~\ref{Proposition:Pbasis}).
Therefore, we will 
 work with the larger space $\Pcal(X)$ and do a deletion-contraction proof there.
An extended version of the Main Theorem will be stated in the next subsection.
This will require some adjustments 
since for $f\in \Pcal(X)$, $f (D) B_X|_\Lambda$ 
 might not be well-defined.

\subsection{Differentiating piecewise polynomial functions and limits}

\begin{Definition}
 Let $H$ by a hyperplane spanned by a sublist  $Y \subseteq X$. A shift
  of such a hyperplane by a vector in the lattice $\Lambda$ is called an \emph{affine admissible hyperplane}.
   An \emph{alcove} is a connected component of the complement of the union of all affine admissible hyperplanes
   
 A vector $w\in U$ is called \emph{affine regular}, if it is not contained in any affine admissible hyperplane. %
 We call $w$ \emph{short affine regular} if its Euclidian length is close to zero.
\end{Definition}
  Note that on the closure of each alcove $\cfrak$,  $B_X$ agrees with a polynomial $p_\cfrak$.
 For example,  the six triangles in Figure~\ref{Figure:ThreeDirections} are the alcoves where $B_X$ agrees with a non-zero
  polynomial. 
 
 Fix a short affine regular vector  $w\in U$. %
   Let $u \in U$. Let $\cfrak \subseteq U$ be an alcove \st %
   $u$ and $u+ \eps w$ are contained in its closure for some small $\eps>0$ and 
  let $p_\cfrak$ be the polynomial that agrees with $B_X$ on the closure of $\cfrak$. 
  We define
   $\lim_w  B_X(z) := p_\cfrak(z)$ and for $f\in \sym(U)$
  \begin{align}
 \lim_w f(D_{\mathrm{pw}}) B_X (u) := f(D) p_\cfrak(u)
  \end{align}
 (pw stands for piecewise).
    Note that the limit can be dropped if  $f(D)B_X$ is continuous at $u$. Otherwise, the limit  is important: note for example that
  $\lim_w B_{(1)}(0)$ is either $1$ or $0$ depending on whether $w$ is positive or negative. 
  More information on this construction can be found in \cite{deconcini-procesi-vergne-2010c} where it was  introduced.
 We will later see that 
 $f_z (D)  B_X (D) $ is continuous if $z\in Z(X)\cap \Lambda$ is in the interior of $Z(X)$ and 
 discontinuous if it is on the boundary.
 
 Recall that $\Zcal(X,w) := (Z(X) - w ) \cap \Lambda$.
\begin{Theorem}
\label{Theorem:MainTheoremBoundary}
 Let $X\subseteq \Lambda\subseteq U \cong \R^d$ be a list of vectors that is totally unimodular and spans $U$.
Let $w$ be a short affine regular and let $z \in \Zcal(X,w)$. Then
 \begin{align}
\lim_{w}  f_z(D_{\mathrm{pw}}) B_X |_\Lambda = \lim_{w}  \todd(X,z)(D_{\mathrm{pw}}) B_X |_\Lambda = \delta_{z}. 
\end{align}
\end{Theorem}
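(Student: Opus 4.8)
The plan is to reduce the boundary statement to the interior statement (the Main Theorem) by a deletion-contraction / limiting argument, exactly in the spirit announced in Section~\ref{Section:ProofIdea}. First I would record the two structural facts that the excerpt already advertises: that $\todd(X,z)(D)B_X = f_z(D)B_X$ as piecewise-polynomial functions (Lemma~\ref{Lemma:BoxSplineDiffPspace}), and that multiplying the Todd operator by $e^{-x}$ corresponds to translating the resulting function by $x$; together with $\todd(X,z) = e^{-z}\todd(X,0)$ these reduce everything to understanding $\lim_w f_z(D_{\mathrm{pw}})B_X$ at lattice points. Since $\psi_X$ kills homogeneous components of degree $\ge N-d+1$ and $B_X$ is supported on $Z(X)$, both evaluations in the statement are finite sums, so no convergence issues arise.

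The core is an induction on $N-d$ (equivalently on $|X|$), using deletion and contraction. In the base case $X$ is a basis, $B_X$ is the indicator of the half-open unit parallelepiped, $\Zcal(X,w)$ is a single point, and the identity is immediate. For the inductive step I would fix $x\in X$ that is not a coloop (if $X$ has only coloops the statement again degenerates and is checked directly), and use the recursion $x\,f_z^{X\setminus x} = f_z^X - f_{z+x}^X$ together with the contraction identity $\pi_x(f_z^X) = f_{\bar z}^{X/x}$. On the box spline side one has the standard deletion-contraction recursion $B_X = B_{X\setminus x} * B_{(x)}$ and the fact that restriction/projection interacts with $f(D)B_X$ as expected. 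The subtlety that the boundary version addresses is precisely the choice of local piece: for $z$ on $\partial Z(X)$ the function $f_z(D)B_X$ is genuinely discontinuous at some lattice points, so I must track the alcove selected by the short affine regular vector $w$ through deletion and contraction — i.e. check that $w$ remains (short) affine regular for $X\setminus x$ and that its image $\bar w$ works for $X/x$, and that $\lim_w$ commutes with the recursions. This is where the De Concini--Procesi--Vergne $\lim_w$ formalism does the work, and it is essentially the content of their Remark~3.15 adapted to the zonotopal-algebra setting.

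The main obstacle I anticipate is bookkeeping around the boundary/limit behaviour rather than any deep new idea: one must verify that the "$\delta_z$" produced by the inductive hypothesis on $X\setminus x$ and $X/x$, when combined via $x\,f_z^{X\setminus x} = f_z^X - f_{z+x}^X$ and evaluated through $\lim_w$, telescopes correctly to $\delta_z$ for $X$ — including the cases where $z$, $z+x$, or their images land on lower-dimensional faces, where several alcoves meet and the value of $\lim_w B_X$ is the one "on the $w$-side." Concretely, the key lemma to isolate and prove is that $\lim_w f_z(D_{\mathrm{pw}})B_X(u)$ depends only on the alcove of $u+\eps w$ and satisfies the same deletion-contraction recursion as $f_z$ itself; granting that, the theorem follows by induction, and the Main Theorem (Theorem~\ref{Theorem:MainTheorem}) is recovered as the special case where $z$ lies in the interior of $Z(X)$, since then the limit can be dropped.
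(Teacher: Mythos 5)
Your skeleton (deletion--contraction induction on $X$, the recursions $x\,f_z^{X\setminus x}=f_z^X-f_{z+x}^X$ and $\pi_x(f_z^X)=f_{\bar z}^{X/x}$, compatibility of $\lim_w$ with these, and Lemma~\ref{Lemma:BoxSplineDiffPspace} to pass between $f_z$ and $\todd(X,z)$) is indeed the paper's skeleton, but there is a genuine gap at the decisive step, namely your claim that once the compatibility lemma is granted ``the theorem follows by induction'' by telescoping. Set $h_z:=\lim_w f_z(D_{\mathrm{pw}})B_X|_\Lambda$. What the recursions plus the inductive hypotheses for $X\setminus x$ and $X/x$ give you is only: $h_z-h_{z+x}=\delta_z-\delta_{z+x}$ for $z\in\Zcal(X\setminus x,w)$, and $\Sigma_x(h_z)=\delta_{\bar z}$, together with $\supp h_z\subseteq\Zcal(X,w)$. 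This system does \emph{not} determine the $h_z$: add to every $h_{z'}$ with $z'$ in one fixed lattice line $z+\spa(x)$ the same function $\delta_u-\delta_{u+x}$ (with $u,u+x\in\Zcal(X,w)$); the differences along the line are unchanged and all column sums $\Sigma_x$ of the perturbation vanish, so both conditions persist while $h_z\neq\delta_z$. Hence no amount of bookkeeping about alcoves and boundary faces closes the induction; an extra uniqueness/injectivity input is indispensable, and your plan never identifies one.

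The paper supplies precisely that input, and it is the heart of the proof rather than a technicality: Proposition~\ref{Proposition:ExactSequences} shows that $\gamma_X^w:\Pcal(X)\to\Xi^w(X)$, $p\mapsto\lim_w p(D_{\mathrm{pw}})B_X|_\Lambda$, is an isomorphism, using the exactness of both rows of \eqref{eq:ExactSequencesExtended}, the count $\dim\Pcal(X)=\vol(Z(X))=\abs{\Zcal(X,w)}$, and the bijection $\Zcal(X,w)\setminus\Zcal(X\setminus x,w)\to\Zcal(X/x,\bar w)$ (imported from \cite{lenz-interpolation-2012}). The induction is then run inside $\Pcal(X)$ on $g_z=f_z-(\gamma_X^w)^{-1}(\delta_z)$ (Lemma~\ref{Lemma:qequalsf}), and it is the injectivity of $\gamma_X^w$, i.e.\ the dimension count, that finally forces $g_z=0$; your proposal works only with the functions $h_z$ and uses neither the space $\Xi^w(X)$ nor this count. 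A secondary point: your opening sentence proposes to reduce the boundary statement to the Main Theorem, but the paper's logic runs the other way (Theorem~\ref{Theorem:MainTheorem} is deduced from the boundary machinery together with Theorem~\ref{Theorem:weakHoltzRon}), so leaning on the interior case here would risk circularity -- and recovering the Main Theorem ``since the limit can be dropped'' also needs the continuity of $f_z(D)B_X$ and $f_z\in\Pcal_-(X)$, which are not automatic. The Todd-versus-$f_z$ half of your plan, via $\psi_X$ killing $\Jcal(X)$ on local pieces, does match the paper (Lemma~\ref{Lemma:BoxSplineDiffPspace}).
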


%
%
%
%

\section{Examples}
\label{Section:Examples}
\begin{Example}

\begin{figure}[tb]
\input{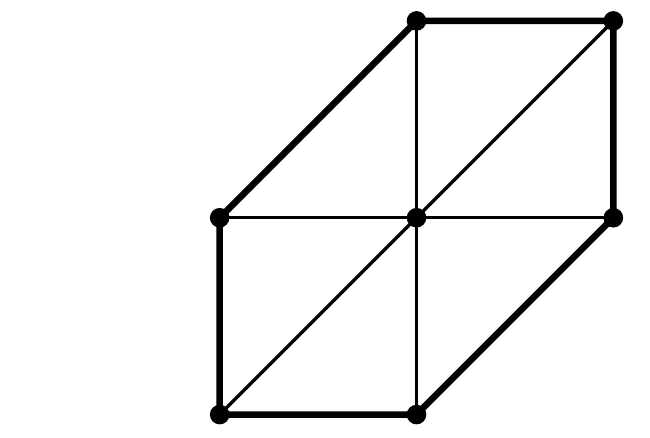_t}
\caption{The box spline and the polynomials $f_z$ corresponding to Example~\ref{Example:ThreeDirections}.}
\label{Figure:ThreeDirections}
\end{figure}
We consider the three smallest one-dimensional examples. For $X=(1,1)$ we obtain 
 $\todd((1,1),1) = (1 + B_1 s + \ldots )(1 - B_1 s + \ldots ) = 1 + 0s + \ldots$
\\ Hence $f_1^{(1,1)}=1$. Furthermore,
%
%
\begin{align*}
 f_1^{(1,1,1)}  &=   1 + \frac s2,
&
f_2^{(1,1,1)} &= 
 1 - \frac{s}{2},
 \\
 f^{(1,1,1,1)}_1 &=  
   1 + s + \frac{s^2}{3},
 & f^{(1,1,1,1)}_2  &=  1 -   \frac{s^2}{6},
&  \text{and }
 f^{(1,1,1,1)}_3 =  1 - s + \frac{s^2}{3}.
\end{align*}
\end{Example}
\begin{Example}
\label{Example:ThreeDirections}
 Let $ X = ( (1,0), (0,1), (1,1) ) \subseteq \Z^2 $. Then $ \Pcal_-(X) = \R $, $\Pcal(X) = \spa\{1,s_1,s_2\} $,
  $\Zcal_-(X) = \{ (1,1) \} $,
 and $f_{(1,1)} = 1$.
 $\Pi_X(u_1,u_2) \cong [0, \min(u_1,u_2)]\subseteq \R^1$.
 The multivariate spline and the vector partition function are:
 \begin{align}
  T_X( u_1,u_2) &= \begin{cases}
               u_2 & \text{for } 0\le u_2 \le u_1 \\
               u_1 & \text{for } 0\le u_1 \le u_2 \\
              \end{cases}
&
 \text{and } \Tcal_X( x,y) &= \begin{cases}
               u_2+1 & \text{for }  0 \le  u_2 \le u_1 \\
               u_1+1 & \text{for }  0 \le  u_1 \le u_2 \\
              \end{cases}.
 \end{align}
 Corollary~\ref{Corollary:KhovanskiiPukhlikov}  correctly predicts that $T_X(u)|_{\Z^2} = \Tcal_X(u - (1,1))$. 
  Figure~\ref{Figure:ThreeDirections} shows the six non-zero local pieces of $B_X$ 
  and the seven polynomials $f_z$ attached to the lattice points of the zonotope $Z(X)$.
\end{Example}

\begin{figure}[t]
 \begin{center}
 \includegraphics{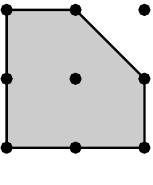}
 \hspace*{2cm}
 \includegraphics{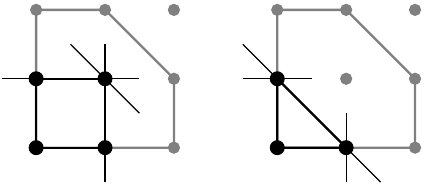}
\end{center}
\caption{The pentagon $\pentagon$, square $\square$, and triangle $\Delta$ that we discuss in Example~\ref{Example:SmoothPolygon}.}
\label{Figure:SmoothPolygon}
\end{figure}
 \begin{Example}
 \label{Example:SmoothPolygon}
We consider the polygons in Figure~\ref{Figure:SmoothPolygon}, which we will denote by $\pentagon$, $\square$, and $\Delta$. These polytopes are defined by the matrix
\begin{align}
 X = \begin{bmatrix}
      1 & 0 & 0 & 1 & 0 \\
      0 & 1 & 0 & 0 & 1 \\
      0 & 0 & 1 & 1 & 1 
     \end{bmatrix},
\end{align}
where the first three columns correspond to slack variables and the rows of the last two are the normal vectors of the movable facets.
 The corresponding zonotope has two interior lattice points:
 $(1, 1, 1)$ corresponds to $\square$ and  $(1, 1, 2)$ to $\Delta$.
The projections of the Todd operators  are $ f_{\square} = 1 + s_3/2$ and $f_{\Delta} = 1 -  s_3/2$, where $s_3$ corresponds to shifting the diagonal face of the pentagon.
 Elementary calculations show 
 that shifting this face outwards\footnote{This means replacing the inequality $a+b \le 3$ by $a+b \le 3+ \eps$.}
 by $\eps \in [-1,1]$ increases the volume of the pentagon by
$(\eps - \frac 12\eps^2)$. 
This implies
 $\diff{s_3}\vol(\text{pentagon})= 1$.

 The volume of the pentagon is $3.5$.
 Corollary~\ref{Corollary:KhovanskiiPukhlikov}  correctly predicts that
 \begin{align}
  \abs{\square \cap \Z^2} &= \vol(\pentagon) + \frac 12 \diff{ s_3} \vol(\pentagon) = 4 \\
  \text{ and }  
  \abs{\Delta \cap \Z^2} &= \vol(\pentagon) - \frac 12 \diff{ s_3} \vol(\pentagon) = 3.
 \end{align}

 The projection to $\Pcal(X)$ of the unshifted Todd operator $\todd(X,0)$ is a lot more complicated:
 $ f_{\pentagon}  = 1 + s_{1} + s_{2} + \frac{3}{2} s_{3} +  s_{1} s_{2} + s_{1} s_{3} + s_{2} s_{3} + s_{3}^{2} $.

On the other hand, since $\Zcal_-(X)$ contains only two points, the box spline $B_X$ must assume the value $\frac 12$ at both 
 points. Then \eqref{eq:DahmenMicchelli} correctly predicts that the volume of the pentagon is the arithmetic mean of the number of integer points
  in the square and the triangle.

 \end{Example}
  \begin{Remark}
  If $X$ is not totally unimodular, then $\psi_X( \todd(X,z))$ is in general not contained in $\Pcal_-(X)$.
  Consider for example $X = (2,1)$ and $\todd(X,z) = \frac{2x}{e^{2x}-1}\frac{x}{1-e^{-x}}$.
  Then
  \begin{align}
    \psi_X(f_z) = \psi_X( 2(1+ 2B_1x)(1-B_1x)) =  2 - x \not\in \Pcal_-(X)= \R.
  \end{align}

 \end{Remark}
 \begin{figure}[tb]
\begin{center}
 \includegraphics[width=8cm,
 ]{./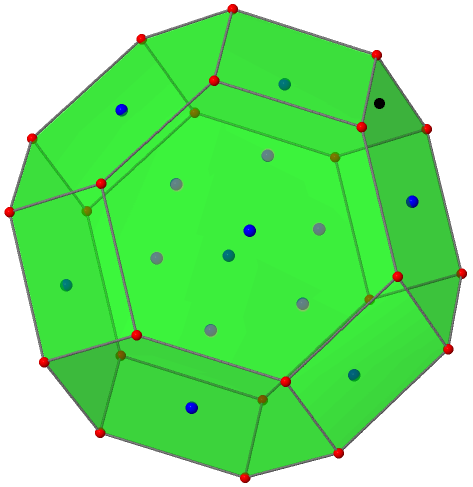}
\end{center}
\caption{The zonotope corresponding to Example~\ref{Example:Kfour}. It has six interior lattice points.}
\label{Figure:KfourZonotope}
\end{figure}
 \begin{Example}
 \label{Example:Kfour}
Let $X$ be a reduced oriented incidence matrix of the complete graph on $4$ vertices / the set of 
 positive roots of the root system $A_3$ (cf.\ Figure \ref{Figure:KfourZonotope}), \ie
\begin{align}
 X &=
 \left(\begin{array}{rrrrrr}
1 & 0 & 0 & 1 & 1 & 0 \\
0 & 1 & 0 & -1 & 0 & 1 \\
0 & 0 & 1 & 0 & -1 & -1
\end{array}\right). \displaybreak[2]\\
f_{ (1, 1, 0) } &=  -\frac{1}{6} t_{1} t_{2} - \frac{1}{6} t_{1} t_{3} + \frac{1}{3} t_{2} t_{3} + \frac{1}{2} t_{1} - \frac{1}{2} t_{2} - \frac{1}{2} t_{3} + 1 \\
f_{ (1, 1, -1) } &=  -\frac{1}{6} t_{1} t_{2} + \frac{1}{3} t_{1} t_{3} - \frac{1}{6} t_{2} t_{3} + \frac{1}{2} t_{1} - \frac{1}{2} t_{2} + \frac{1}{2} t_{3} + 1 \\
f_{ (2, 0, -1) } &=  -\frac{1}{6} t_{1} t_{2} - \frac{1}{6} t_{1} t_{3} + \frac{1}{3} t_{2} t_{3} - \frac{1}{2} t_{1} + \frac{1}{2} t_{2} + \frac{1}{2} t_{3} + 1 \\
f_{ (1, 0, 0) } &=  \frac{1}{3} t_{1} t_{2} - \frac{1}{6} t_{1} t_{3} - \frac{1}{6} t_{2} t_{3} + \frac{1}{2} t_{1} + \frac{1}{2} t_{2} - \frac{1}{2} t_{3} + 1 \\
f_{ (2, 1, -1) } &=  \frac{1}{3} t_{1} t_{2} - \frac{1}{6} t_{1} t_{3} - \frac{1}{6} t_{2} t_{3} - \frac{1}{2} t_{1} - \frac{1}{2} t_{2} + \frac{1}{2} t_{3} + 1 \\
f_{ (2, 0, 0) } &=  -\frac{1}{6} t_{1} t_{2} + \frac{1}{3} t_{1} t_{3} - \frac{1}{6} t_{2} t_{3} - \frac{1}{2} t_{1} + \frac{1}{2} t_{2} - \frac{1}{2} t_{3} + 1 
\end{align}
\end{Example}

%
%
%
%

\section{Zonotopal Algebra}
\label{Section:ZonotopalAlgebra}
 In this section we will recall a few facts about the space $\Pcal(X)$ and its dual, the space $\Dcal(X)$
 that is spanned by the local pieces of the box spline.
 The theory around these spaces was named zonotopal algebra by 
 Holtz and Ron in \cite{holtz-ron-2011}.
 This theory  allows us to explicitly describe the map $\psi_X$. This description will then be used to describe the behaviour of the polynomials $f_z$ under deletion and contraction.
 This section is an extremely condensed version of \cite{lenz-forward-2012}.

Recall that the list of vectors $X$ is contained in a vector space $U \cong \R^d$ and that we denote the dual space by $V$.
We start by defining a pairing between the symmetric algebras $\sym(U) \cong \R[s_1,\ldots, s_d]$ and 
$\sym(V) \cong \R[t_1,\ldots, t_d]$:
\begin{align}
\begin{split}
 \pair{\cdot}{\cdot} : \R[s_1,\ldots, s_d] \times \R[t_1,\ldots, t_d] &\to \R  \\
    \pair{p}{f} := \left( p\left(\diff{t_1},\ldots, \frac{\partial}{\partial t_d} \right)  f \right) (0), 
\end{split}
\end{align}
\ie we let $p$ act on $f$ as a differential operator and take the degree zero part of the result.
Note that this pairing  extends to a pairing
 $\pair{\cdot}{\cdot} : \R[[s_1,\ldots, s_d]] \times \R[t_1,\ldots, t_d] \to \R$.

  It is known that the box spline $B_X$ agrees with a polynomial on the closure of each alcove.
  These local pieces and their partial derivatives
  span the \emph{Dahmen-Micchelli} space $\Dcal(X)$. 
  This space can be described as the kernel of the cocircuit ideal $\Jcal(X)$, namely
  \begin{align}
     \label{eq:DahmenMicchelliCocircuit}
     \Dcal(X) = \{  f\in \sym(V) :  \pair{p}{f} = 0 \text{ for all } p \in \Jcal(X)  \}.
  \end{align}

 We will now explain a construction of certain polynomials that is essentially due to
 De Concini, Procesi, and Vergne \cite{deconcini-procesi-vergne-2010b}.
  Let $Z\subseteq U$ be a finite list of vectors and let $B=(b_1,\ldots, b_d) \subseteq Z$ be a basis. 
 It is important that the basis is ordered and that this order is
  the order obtained by restricting
 the order on $Z$ to $B$. For $i\in\{ 0,\ldots, d\}$, we define $S_i = S_i^B := \spa \{ b_1,\ldots, b_i \}$. 
Hence
 \begin{align}
 \label{eq:flagofsubspaces}
   \{ 0 \} = S_0^B  \subsetneq S_1^B \subsetneq S_2^B \subsetneq \ldots \subsetneq S_d^B = U \cong \R^d
 \end{align}
is a flag of subspaces.
 Let $u\in S_{i}\setminus S_{i-1}$.  
 The vector $u$ can be written as $u=\sum_{\nu=1}^i \lambda_\nu b_\nu$ in a unique way. Note that $\lambda_i\neq 0$.
 If $\lambda_i>0$, we call $u$ positive and if $\lambda_i< 0 $, we call $u$ negative.
We partition $Z \cap (S_{i} \setminus S_{i-1})$ as follows:
 \begin{align}
   P_{i}^B &:= \{  u \in Z \cap (S_{i} \setminus S_{i-1}) : u \text{ positive}\} \\  
 \text{and }   N_{i}^B &:= \{ u \in Z \cap (S_{i} \setminus S_{i-1}) : u \text{ negative} \}.
\intertext{We define}
  T_{i}^{B+}  &:= (-1)^{\abs{N_{i}}} \cdot T_{P_i} * T_{-N_i} 
 \text{ and } T_{i}^{B-}:= (-1)^{\abs{P_i}} \cdot T_{-P_i} * T_{N_i}.
 \end{align}
Note that  $T_{i}^{B+}$ is supported in  $\cone(P_i,-N_i)$ and 
 that 
 \begin{align}
 T_{i}^{B-}(x) = (-1)^{\abs{P_i\cup N_i}} T_i^{B+}(- x). 
 \end{align}
Now define 
\begin{align}
   R_{i}^{B}  := T_i^{B+} -  T_i^{B-} %
 \qquad \text{ and } \qquad
 R^B_Z = R^B := R_1^B * \cdots * R_d^B.
\label{eq:BasisElementConvolution}
\end{align}

 We denote the set of all sublists $B\subseteq X$ that are bases for $U$ by $\BB(X)$.
Fix a basis $B\in \BB(X)$. A vector $x\in X\setminus B$ is called \emph{externally active}
 if $x\in \spa \{ b \in B : b \le x \}$,
\ie $x$ is the maximal element of the unique circuit contained in $B\cup x$.
 The set of all externally active elements is denoted $E(B)$.
 
 \begin{Definition}[Basis for $\Dcal(X)$]
\Xintro
We define
\begin{align}
 \Bcyr(X) :=  \{ \abs{\det(B)}  R^B_{X\setminus E(B)} : B\in \BB(X)   \}.
\end{align}
\end{Definition}

\begin{Proposition}[\cite{dyn-ron-1990,jia-1990}] %
\label{Proposition:PDduality}
\Xintro
Then the spaces $\Pcal(X)$ and $\Dcal(X)$ are dual under the pairing $\pair{\cdot}{\cdot}$, \ie
\begin{align}
 \begin{split}
 \Dcal(X) &\to \Pcal(X)^* \\
  f &\mapsto \pair{\cdot}{f}
 \end{split}
\end{align}
is an isomorphism.
\end{Proposition}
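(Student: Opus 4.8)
The plan is to derive the duality formally from the two facts already recalled: the direct‑sum decomposition $\sym(U)=\Pcal(X)\oplus\Jcal(X)$ and the description $\Dcal(X)=\{f\in\sym(V):\pair{p}{f}=0\text{ for all }p\in\Jcal(X)\}$ from \eqref{eq:DahmenMicchelliCocircuit}. The one computational ingredient is the behaviour of the pairing on monomials: for $s^{\alpha}:=s_{1}^{a_{1}}\cdots s_{d}^{a_{d}}$ and $t^{\beta}:=t_{1}^{b_{1}}\cdots t_{d}^{b_{d}}$ one has $\pair{s^{\alpha}}{t^{\beta}}=\bigl(\prod_{i}a_{i}!\bigr)\,\delta_{\alpha,\beta}$, since $\partial_{t_{i}}^{a_{i}}$ applied to $t_{i}^{b_{i}}$ and evaluated at $0$ equals $a_{i}!$ if $a_{i}=b_{i}$ and $0$ otherwise. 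Hence the pairing vanishes on $\sym^{j}(U)\times\sym^{k}(V)$ for $j\ne k$, and its restriction $\sym^{k}(U)\times\sym^{k}(V)\to\R$ is a perfect pairing for every $k$; equivalently, it identifies $\sym^{k}(V)$ with $(\sym^{k}(U))^{*}$.

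First I would pass to graded pieces. Both summands in $\sym(U)=\Pcal(X)\oplus\Jcal(X)$ are spanned by homogeneous elements ($\Pcal(X)$ by the products $p_{Y}$, and $\Jcal(X)$ is a homogeneous ideal), so the decomposition refines to $\sym^{k}(U)=\Pcal(X)_{k}\oplus\Jcal(X)_{k}$ for every $k$. Because $\Jcal(X)$ is homogeneous, an element $f\in\sym(V)$ lies in $\Dcal(X)$ if and only if each homogeneous component of $f$ is annihilated by $\Jcal(X)$; thus $\Dcal(X)=\bigoplus_{k}\Dcal(X)_{k}$ with $\Dcal(X)_{k}=\{f\in\sym^{k}(V):\pair{p}{f}=0\text{ for all }p\in\Jcal(X)_{k}\}$. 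Under the identification $\sym^{k}(V)\cong(\sym^{k}(U))^{*}$ of the previous paragraph, $\Dcal(X)_{k}$ is exactly the annihilator of the subspace $\Jcal(X)_{k}$, hence canonically isomorphic to $\bigl(\sym^{k}(U)/\Jcal(X)_{k}\bigr)^{*}\cong\Pcal(X)_{k}^{*}$, using the degreewise decomposition. In particular $\dim\Dcal(X)_{k}=\dim\Pcal(X)_{k}$. Since $p_{Y}$ has degree $\abs{Y}\le N-d$ whenever $\rank(X\setminus Y)=\rank(X)$, the space $\Pcal(X)$ is concentrated in degrees at most $N-d$ and is finite dimensional, and therefore so is $\Dcal(X)$, with $\dim\Dcal(X)=\dim\Pcal(X)$.

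Finally I would examine the map in the statement, $\Phi\colon\Dcal(X)\to\Pcal(X)^{*}$, $f\mapsto\pair{\cdot}{f}\big|_{\Pcal(X)}$. For injectivity, suppose $f\in\Dcal(X)$ satisfies $\pair{p}{f}=0$ for all $p\in\Pcal(X)$; by the definition of $\Dcal(X)$ we also have $\pair{p}{f}=0$ for all $p\in\Jcal(X)$, hence $\pair{p}{f}=0$ for all $p\in\sym(U)=\Pcal(X)\oplus\Jcal(X)$, and the nondegeneracy of the pairing (first paragraph) forces $f=0$. Since $\Dcal(X)$, $\Pcal(X)$ and $\Pcal(X)^{*}$ are finite dimensional of the same dimension, the injective map $\Phi$ is an isomorphism, which is the assertion. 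The genuine mathematical content — the splitting $\sym(U)=\Pcal(X)\oplus\Jcal(X)$ and the cocircuit description of $\Dcal(X)$ — is imported from \cite{dyn-ron-1990,holtz-ron-2011}, so the only point that requires care here is the grading bookkeeping that lets one apply the perfect pairing on each finite‑dimensional graded piece and then upgrade ``injective'' to ``isomorphism'' via the dimension count; I expect no real obstacle beyond that.
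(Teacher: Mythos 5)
The paper does not prove this proposition at all: it is imported verbatim from \cite{dyn-ron-1990,jia-1990}, so there is no internal argument to compare yours against. Judged on its own terms, your derivation is correct: the pairing is perfect on each graded piece $\sym^k(U)\times\sym^k(V)$, the decomposition $\sym(U)=\Pcal(X)\oplus\Jcal(X)$ refines degreewise because both summands are spanned by homogeneous elements, $\Dcal(X)$ is graded and its degree-$k$ part is exactly the annihilator of $\Jcal(X)_k$, whence $\dim\Dcal(X)_k=\dim\Pcal(X)_k$ and finite-dimensionality; your injectivity argument (an $f\in\Dcal(X)$ killing $\Pcal(X)$ kills all of $\Pcal(X)\oplus\Jcal(X)=\sym(U)$, hence $f=0$) plus the dimension count then upgrades the map $f\mapsto\pair{\cdot}{f}$ to an isomorphism. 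The one caveat you should state explicitly is that your proof is a reduction, not an independent argument: all of the substance sits in the two imported inputs, namely $\sym(U)=\Pcal(X)\oplus\Jcal(X)$ and the identification \eqref{eq:DahmenMicchelliCocircuit} of $\Dcal(X)$ (defined in the paper via local pieces of $B_X$) with the kernel of the cocircuit ideal. In the cited literature these facts are established essentially together with the duality — given $\Dcal(X)=\ker\Jcal(X)$, nondegeneracy of the pairing on $\Pcal(X)\times\Dcal(X)$ and the direct-sum decomposition are two faces of the same theorem — so quoting the decomposition and then deducing the duality relocates rather than removes the difficulty. Within the logical structure of this paper, where both inputs are quoted as known, that is perfectly acceptable, and your grading bookkeeping is the right way to make the deduction clean.
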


\begin{Proposition}[\cite{dyn-ron-1990}%
 ]
\label{Proposition:Pbasis}
\Xintro
  A basis for $\Pcal(X)$ is given by  
 \begin{align}
  \Bcal(X) &:= \{ Q_B   : B \in \BB(X) \},
 \end{align}
 where $Q_B:=p_{ X\setminus (B \cup E(B))}$.
\end{Proposition}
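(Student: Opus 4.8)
The plan is to deduce the basis statement for $\Pcal(X)$ from the dual statement, namely that $\Bcyr(X) = \{|\det(B)| R^B_{X\setminus E(B)} : B\in\BB(X)\}$ is a basis for $\Dcal(X)$, together with the duality of $\Pcal(X)$ and $\Dcal(X)$ under $\pair{\cdot}{\cdot}$ (Proposition~\ref{Proposition:PDduality}). First I would recall that $\abs{\BB(X)}$ equals $\vol(Z(X)) = \dim\Pcal(X) = \dim\Dcal(X)$ (this is standard zonotopal algebra and is cited earlier), so it suffices to show that the proposed set $\Bcal(X) = \{Q_B : B\in\BB(X)\}$ spans $\Pcal(X)$, or dually that the pairing matrix $\bigl(\pair{Q_B}{R^{B'}_{X\setminus E(B')}}\bigr)_{B,B'\in\BB(X)}$ is nonsingular. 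Each $Q_B = p_{X\setminus(B\cup E(B))}$ is a product of $N-d$ linear forms taken from $X$, and the rank condition $\rank(X\setminus(X\setminus(B\cup E(B)))) = \rank(B\cup E(B)) = d = \rank(X)$ holds since $B$ is a basis; hence $Q_B\in\Pcal(X)$, and every $Q_B$ is homogeneous of degree exactly $N-d$, which is the top degree of $\Pcal(X)$.

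The key step is a triangularity argument with respect to a suitable ordering (e.g.\ a term order or the partial order coming from activity) on the index set $\BB(X)$. One shows that $\pair{Q_B}{R^{B}_{X\setminus E(B)}}\neq 0$ for every $B$, and that $\pair{Q_B}{R^{B'}_{X\setminus E(B')}} = 0$ whenever $B' \prec B$ in the chosen order (or whenever $B'\neq B$ and $B$ is maximal among those giving a nonzero pairing). The geometric content is that $R^{B}$ is built as a convolution $R_1^B * \cdots * R_d^B$ adapted to the flag $S_0^B\subsetneq\cdots\subsetneq S_d^B$, so that differentiating it by the linear forms appearing in $Q_B = p_{X\setminus(B\cup E(B))}$ — none of which lie in the "active" part $E(B)$ — reduces $R^B$ to (a nonzero multiple of) the box spline of the basis $B$ itself, whose value at $0$ is $\abs{\det B}^{-1}$ times a nonzero constant, and this is matched by the normalising factor $\abs{\det B}$. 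For $B'\neq B$, the homogeneity forces the pairing to vanish unless the linear forms of $Q_B$ together with $E(B')$ span all of $U$ in a compatible way; the externally-active set $E(B')$ is precisely designed so that $\spa(X\setminus E(B'))$ interacts with the flag of $B'$ in a controlled manner, giving the triangular vanishing.

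The main obstacle will be proving this triangularity cleanly: making precise the claim that $p_{X\setminus(B\cup E(B))}(D)$ applied to $R^{B'}$ vanishes for $B'$ below $B$, and is a nonzero constant for $B'=B$. This requires a careful bookkeeping of which vectors of $X$ fall into which block $P_i^{B'}$ or $N_i^{B'}$ of the flag of $B'$, and an induction on $d$ via deletion–contraction: restricting to the hyperplane $S_{d-1}^{B'}$ kills the top convolution factor $R_d^{B'}$ and lets one invoke the inductive hypothesis for the contraction. Since this is the condensed version of \cite{lenz-forward-2012}, I would cite the detailed combinatorial lemmas from there rather than reprove them, and present here only the reduction: (i) both sets have cardinality $\vol(Z(X))$; (ii) $Q_B\in\Pcal(X)$; (iii) the pairing matrix against $\Bcyr(X)$ is triangular with nonzero diagonal, hence invertible; (iv) conclude by Proposition~\ref{Proposition:PDduality} that $\Bcal(X)$ is a basis.
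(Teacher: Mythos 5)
The paper offers no proof of this proposition at all: it is quoted from \cite{dyn-ron-1990} (and the duality statement with $\Bcyr(X)$ is quoted from \cite{lenz-forward-2012} as Theorem~\ref{Theorem:MainTheoremCentral}), so your duality route is necessarily different from ``the paper's proof''. As a strategy it is viable and close in spirit to \cite{lenz-forward-2012}: check $Q_B\in\Pcal(X)$ (your rank argument is correct), show the pairing matrix $\bigl(\pair{Q_B}{R^{B'}_{X\setminus E(B')}}\bigr)$ is invertible, and conclude by a dimension count. Two caveats on the framing: the count $\abs{\BB(X)}=\vol(Z(X))$ holds only in the unimodular setting, whereas the proposition is stated for arbitrary spanning $X$; the clean general input is $\dim\Dcal(X)=\abs{\BB(X)}$ (Dahmen--Micchelli) together with Proposition~\ref{Proposition:PDduality}. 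Also, to avoid circularity you must indeed cite the pairing computations of \cite{lenz-forward-2012} rather than Theorem~\ref{Theorem:MainTheoremCentral} itself, since that theorem is phrased as duality to ``the basis $\Bcal(X)$'' and so presupposes the present proposition; in fact the cited computation gives full duality $\pair{Q_B}{\abs{\det B'}R^{B'}_{X\setminus E(B')}}=\delta_{B,B'}$, so triangularity is more than you need to argue for.

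There is, however, one concretely false statement in your sketch: $Q_B=p_{X\setminus(B\cup E(B))}$ is homogeneous of degree $N-d-\abs{E(B)}$, not of degree $N-d$ for every $B$; the degrees vary with the external activity (this is exactly what makes the Hilbert series of $\Pcal(X)$ an evaluation of the Tutte polynomial), and only bases with $E(B)=\emptyset$ contribute top-degree elements. This is not merely cosmetic, because your heuristic for the off-diagonal vanishing (``homogeneity forces the pairing to vanish unless\ldots'') leans on comparing degrees; with the degrees corrected, the degree-$\deg Q_B$ component of $R^{B'}$ need not be controlled by such a count, and the actual vanishing argument in \cite{lenz-forward-2012} is a genuinely combinatorial one about the flag $S_1^{B'}\subsetneq\cdots\subsetneq S_d^{B'}$ and which factors of $Q_B$ annihilate which convolution factors $R_i^{B'}$. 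So the skeleton (i)--(iv) you list is sound, but step (iii) cannot be waved through on degree grounds; it must be imported verbatim from, or reproved along the lines of, the cited work.
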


\begin{Theorem}[\cite{lenz-forward-2012}]
\label{Theorem:MainTheoremCentral}
\Xintro
 Then
$\Bcyr(X)$ %
is a basis for $\Dcal(X)$ and this 
 basis is dual to the basis $\Bcal(X)$ for the central $\Pcal$-space  $\Pcal(X)$. %
\end{Theorem}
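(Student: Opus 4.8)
The plan is to use the perfect pairing between $\Pcal(X)$ and $\Dcal(X)$ from Proposition~\ref{Proposition:PDduality} together with the explicit basis $\Bcal(X)=\{Q_B:B\in\BB(X)\}$ of $\Pcal(X)$ from Proposition~\ref{Proposition:Pbasis}. Since $\dim\Dcal(X)=\dim\Pcal(X)=\abs{\BB(X)}$, it is enough to prove two statements: \textbf{(a)} $R^B_{X\setminus E(B)}\in\Dcal(X)$ for every $B\in\BB(X)$; and \textbf{(b)} the biorthogonality relations
\begin{align*}
 \abs{\det(B)}\,\pair{Q_{B'}}{R^B_{X\setminus E(B)}}=\delta_{B,B'}\qquad\text{for all }B,B'\in\BB(X).
\end{align*}
Indeed, (b) forces $B\mapsto\abs{\det(B)}R^B_{X\setminus E(B)}$ to be injective with linearly independent image; by the dimension count that image is then a basis of $\Dcal(X)$, and (a) and (b) together say precisely that it is the basis dual to $\Bcal(X)$.

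For (a) I would use the description $\Dcal(X)=\{f\in\sym(V):p_C(D)f=0\text{ for every cocircuit }C\text{ of }X\}$, which follows from \eqref{eq:DahmenMicchelliCocircuit} via $\pair{g\,p_C}{f}=\pair{g}{p_C(D)f}$. The function $R^B_{X\setminus E(B)}=R^B_1*\cdots*R^B_d$ is assembled from the flag $S^B_0\subsetneq\cdots\subsetneq S^B_d=U$, with $R^B_i$ a signed combination of multivariate splines built from the vectors of $X\setminus E(B)$ lying in the layer $S^B_i\setminus S^B_{i-1}$. Since directional derivatives commute with convolution and send $T_Y$ to $T_{Y\setminus y}$, applying $p_C(D)$ peels the vectors of a cocircuit $C=X\setminus H$ off the appropriate factors; comparing the spanning hyperplane $H$ with the flag one locates a layer that receives strictly more vectors of $C$ than the factor $R^B_i$ can absorb, so that iterated derivative drops below degree $0$ and the whole convolution vanishes. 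The only delicate point here is the bookkeeping of which vectors of $X\setminus E(B)$ sit in each layer.

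The heart is (b), which I would prove by induction on $\abs{X}$ via deletion--contraction with respect to the maximal element $x$ of $X$, passing to $X\setminus x$ and $X/x$. The base case $\abs{X}=d$ is a normalization check: $E(X)=\emptyset$, $Q_X=1$, and the convolution $R^X_X=R^X_1*\cdots*R^X_d$ of the full-line densities in the directions of the ordered basis equals the constant $1/\abs{\det X}$, so $\abs{\det X}\pair{Q_X}{R^X_X}=1$. In the inductive step one first disposes of loops ($x=0$, where $\Pcal$ and $\Dcal$ are unchanged) and coloops (where the box spline, hence $\Dcal(X)$, as well as $R^B$ and $Q_B$, factor across the extra coordinate direction and reduce verbatim to $X\setminus x$). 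Otherwise split $\BB(X)=\BB_0\sqcup\BB_1$ according to whether $x\notin B$ or $x\in B$. The key structural observations are: $x$ is externally active for every $B\in\BB_0$, so for such $B$ one has $X\setminus E_X(B)=(X\setminus x)\setminus E_{X\setminus x}(B)$ and $Q^X_B=Q^{X\setminus x}_B$, whence $R^B_{X\setminus E(B)}\in\Dcal(X\setminus x)\subseteq\Dcal(X)$ and the $\BB_0\times\BB_0$ block of (b) is inherited from $X\setminus x$; and for $B\in\BB_1$ the flag data of $B$ project under $\pi_x$ onto the flag data of $B/x:=\pi_x(B\setminus x)$ in $X/x$, which — together with the deletion--contraction exact sequence $0\to\Dcal(X/x)\to\Dcal(X)\xrightarrow{x(D)}\Dcal(X\setminus x)\to0$ and the identity $\pair{q}{f}=\pair{\pi_x(q)}{f}$ for $f\in\Dcal(X/x)$ — lets one transport the relations known for $X/x$. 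The mixed block with $B\in\BB_0$ and $B'\in\BB_1$ vanishes cleanly: $Q^X_{B'}\in\Jcal(X\setminus x)$ because $(B'\setminus x)\cup E_X(B')$ lies in the hyperplane $\spa(B'\setminus x)$, while $R^B_{X\setminus E(B)}\in\Dcal(X\setminus x)=\Jcal(X\setminus x)^{\perp}$.

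The step I expect to be the real obstacle is the $\BB_1$ part of the argument: tracking how external activity behaves under contraction (elements that become loops in $X/x$, elements that acquire external activity), keeping the signs and $\abs{\det}$ normalizations aligned through the projection $\pi_x$, and in particular the mixed block with $B\in\BB_1$ and $B'\in\BB_0$, where neither object is a pure $X\setminus x$ object and the one-line orthogonality trick above is unavailable. I expect this to require the explicit deletion--contraction recursion for the families $\{R^B_{X\setminus E(B)}\}$ and $\{Q_B\}$ — in the spirit of the recursion $x\,q_z^{X\setminus x}=q_z^X-q_{z+x}^X$, $\pi_x(q_z^X)=q_{\bar z}^{X/x}$ recalled in Section~\ref{Section:ProofIdea} — which is precisely the technical content carried over from the zonotopal-algebra machinery of \cite{lenz-forward-2012}.
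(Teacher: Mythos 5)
This theorem is not proved in the present paper at all: it is imported verbatim from \cite{lenz-forward-2012}, so there is no in-house argument to compare yours against --- what you are attempting is a proof of the main result of that reference. Your frame is the right one (use the perfect pairing of Proposition~\ref{Proposition:PDduality} and the basis $\Bcal(X)$ of Proposition~\ref{Proposition:Pbasis}, and reduce to (a) membership $R^B_{X\setminus E(B)}\in\Dcal(X)$ and (b) the biorthogonality relations), your base case is correct, and the mixed block with $B'\ni x$, $B\not\ni x$ via $E(B')\subseteq\spa(B'\setminus x)$ for $x$ maximal is sound. But the proposal has genuine gaps at the decisive points. First, nothing in it shows that $R^B_{X\setminus E(B)}$ is a polynomial: a priori it is only a distribution built from convolutions of multivariate splines, and without polynomiality neither $\pair{Q_{B'}}{R^B_{X\setminus E(B)}}$ nor membership in $\Dcal(X)\subseteq\sym(V)$ even makes sense. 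This polynomiality is exactly what the signs $(-1)^{\abs{N_i}}$, $(-1)^{\abs{P_i}}$ in $T_i^{B\pm}$ achieve (already for $X=B=(x)$ one gets $R^B=T_x+T_{-x}=1$ rather than the sign function), and it is a nontrivial lemma of De Concini--Procesi--Vergne and \cite{lenz-forward-2012} that must be proved, not assumed. Second, your vanishing argument for (a) --- ``the iterated derivative drops below degree $0$, so the convolution vanishes'' --- fails as stated: differentiating $T_Y$ along all of $Y$ yields $\delta_0$, and further derivatives yield derivatives of $\delta_0$, not zero. The identity $p_C(D)R^B=0$ has to exploit the cancellation between $T_i^{B+}$ and $T_i^{B-}$ (equivalently the polynomiality and a degree count within each layer of the flag), i.e.\ precisely the structure you have not established.

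Third, and most importantly, the heart of the theorem is (b), and for the blocks involving bases containing $x$ --- in particular the mixed block $x\in B$, $x\notin B'$, and the contraction step, where one must relate $E_X(B)$ to the external activity of $\bar B$ in $X/x$ and factor $R^B$ through $\pi_x$ (the factorisation $R^B=(T_x-T_{-x})*R^{B\setminus x}_{X\setminus(E(B)\cup x)}$ used in the proof of Lemma~\ref{Lemma:Surjection}, which is stated there for $x$ \emph{minimal}, whereas you chose $x$ maximal, so the bookkeeping changes) --- you explicitly defer to ``the technical content carried over from \cite{lenz-forward-2012}''. Since that technical content is exactly the statement to be proved, the argument is at this point circular, or at best an outline of the known strategy rather than a proof.
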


\begin{Remark}
\label{Remark:ProjectionInBasis}
Theorem~\ref{Theorem:MainTheoremCentral} yields an 
 explicit formula for the projection map $\psi_X : \R[[s_1,\ldots, s_d]] \to \Pcal(X)$ that we have 
 defined on page \pageref{eq:ProjectionMap}:
\begin{align}
  f_z := \psi_{X} (\todd(X,z)) =  \sum_{B\in \BB(X)} \pair{\todd(X,z)}{R^B}\, Q_B.
\end{align}
\end{Remark}

%
%
%
%

\section{Proofs} %
\label{Section:TechnicalDetails}
  In this section we will prove the Main Theorem and its corollaries. %
 The proof uses a deletion-contraction argument.
 Deletion-contraction identities for the polynomials $f_z$ will be obtained 
 based on the following idea:
one can write $\psi_X(f)$ as
\begin{align}
\label{eq:DelConDecomposition}
  \psi_X(f) &= \sum_{B\in \BB(X)} \pair{f}{R_B}\, Q_B 
  =
  \sum_{x\not \in B\in \BB(X)} \pair{f}{R_B}\, Q_B
   +
  \sum_{x \in B\in \BB(X)} \pair{f}{R_B}\, Q_B
  \end{align}
(cf.\ Remark~\ref{Remark:ProjectionInBasis}).
We will see that the first sum  on the right-hand side of \eqref{eq:DelConDecomposition} corresponds to $\Pcal_-(X\setminus x)$ and the second to $\Pcal_-(X/x)$. %
Note that $\psi_X(f)$ is by definition independent of the order imposed on the list $X$, 
 while each of the two sums depends on this order.

  It is an important observation that for $x\in X$ and $z\in \Lambda$
\begin{align}
\label{eq:fmultx}
 x \cdot \todd(X\setminus x,z) = \todd(X, z) - \todd(X, z+x) 
\end{align}
  holds because $\frac{x}{1 - e^{-x}}(1 - e^{-x})=x$.

\begin{Lemma}
\label{Lemma:Injection}
Let $x\in X$ and let $f\in \sym(U)$. Then
\begin{align}
  x\psi_{X\setminus x}(f) = %
  \psi_X(xf).
 \end{align}
Furthermore, for all $z\in \Lambda$,
 \begin{align}
  x f_z^{X\setminus x} =  f_{z}^X - f_{z+x}^X. 
\end{align}
\end{Lemma}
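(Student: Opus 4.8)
The plan is to prove the two identities in Lemma~\ref{Lemma:Injection} in order, since the second follows from the first together with the deletion-contraction identity \eqref{eq:fmultx} for the Todd operator.

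\textbf{Step 1: the identity $x\psi_{X\setminus x}(f) = \psi_X(xf)$.}
The key point is that $\psi_{X\setminus x}$ and $\psi_X$ are projections onto $\Pcal(X\setminus x)$ and $\Pcal(X)$ along the respective cocircuit ideals $\Jcal(X\setminus x)$ and $\Jcal(X)$, and both are graded maps that kill high-degree homogeneous components. First I would decompose $f = \psi_{X\setminus x}(f) + j$ with $j\in\Jcal(X\setminus x)$, so that $xf = x\psi_{X\setminus x}(f) + xj$. Now I need two facts: (a) $x\psi_{X\setminus x}(f)\in\Pcal(X)$, and (b) $xj\in\Jcal(X)$. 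For (a), recall $\Pcal(X\setminus x)\subseteq\Pcal(X)$ (deleting $x$ only shrinks the set of cocircuits whose complements have full rank, so $\Pcal(X\setminus x)\subseteq \Pcal(X)$), and multiplication by $x$ preserves membership in $\Pcal(X)$: if $p_Y$ is a generator of $\Pcal(X\setminus x)$ with $\rank((X\setminus x)\setminus Y) = \rank(X\setminus x)$, then since $x\notin Y$ we get $p_{Y\cup x} = x p_Y$ with $\rank(X\setminus(Y\cup x)) = \rank(X\setminus x) = \rank(X)$ (using that $x$ is not a coloop of $X$ when $X\setminus x$ spans, which holds here since $X\setminus x$ still has full rank — and if $x$ \emph{is} a coloop the statement needs separate, easy handling, or one restricts to the generic case). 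For (b), every cocircuit $C$ of $X\setminus x$ that does not become degenerate either is a cocircuit of $X$ or $C\cup x$ contains a cocircuit of $X$; in the language of matroids, $\Jcal(X\setminus x)\subseteq \Jcal(X)$ because a cocircuit of $X\setminus x$ is a union of cocircuits of $X$ (deletion corresponds to contraction in the dual, and cocircuits of the contraction are unions of cocircuits of the original). Hence $j\in\Jcal(X)$ and a fortiori $xj\in\Jcal(X)$. Therefore $xf = x\psi_{X\setminus x}(f) + xj$ is precisely the decomposition of $xf$ along $\Pcal(X)\oplus\Jcal(X)$, which gives $\psi_X(xf) = x\psi_{X\setminus x}(f)$. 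The extension to $\R[[s_1,\ldots,s_d]]$ is automatic because $\psi$ is defined degree by degree and multiplication by $x$ shifts degrees by one.

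\textbf{Step 2: the deletion-contraction identity for $f_z$.}
Apply Step~1 with $f = \todd(X\setminus x, z)\in\R[[s_1,\ldots,s_d]]$. This gives
\begin{align*}
  x f_z^{X\setminus x} = x\psi_{X\setminus x}(\todd(X\setminus x,z)) = \psi_X\!\left(x\,\todd(X\setminus x,z)\right).
\end{align*}
By the observation \eqref{eq:fmultx}, $x\,\todd(X\setminus x,z) = \todd(X,z) - \todd(X,z+x)$, so by linearity of $\psi_X$,
\begin{align*}
  x f_z^{X\setminus x} = \psi_X(\todd(X,z)) - \psi_X(\todd(X,z+x)) = f_z^X - f_{z+x}^X,
\end{align*}
which is the claim.

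\textbf{Main obstacle.}
The genuinely nontrivial part is Step~1, specifically verifying the two inclusions $\Pcal(X\setminus x)\subseteq\Pcal(X)$ combined with $x\cdot\Pcal(X\setminus x)\subseteq\Pcal(X)$, and $\Jcal(X\setminus x)\subseteq\Jcal(X)$, at the level of the generators $p_Y$ and $p_C$. These are matroid-theoretic bookkeeping facts about how cocircuits behave under deletion, and one must be careful about the edge case where $x$ is a coloop (where $X\setminus x$ no longer spans $U$, so $\Pcal(X\setminus x)\subseteq\sym(U)$ still makes sense but $B_X$ and the later geometric statements degenerate); in that case the identity is checked directly, or one notes that the paper's later deletion-contraction arguments handle coloops separately. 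Once the algebraic framework of Step~1 is in place, Step~2 is a one-line consequence of \eqref{eq:fmultx}, which the excerpt has already isolated as the relevant mechanism.
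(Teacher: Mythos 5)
Your route is genuinely different from the paper's: the paper proves $x\psi_{X\setminus x}(f)=\psi_X(xf)$ by computing in the dual pair of bases $\{Q_B\}$ and $\{R^B\}$ (ordering $X$ so that $x$ is minimal, showing $\pair{xf}{R^B}=0$ for bases containing $x$ and $\pair{xf}{R^B_{X\setminus E(B)}}=\pair{f}{R^B_{X\setminus(E(B)\cup x)}}$ otherwise), whereas you argue directly with the decomposition $\sym(U)=\Pcal(X)\oplus\Jcal(X)$: write $f=\psi_{X\setminus x}(f)+j$ with $j\in\Jcal(X\setminus x)$ and check that $xf=x\psi_{X\setminus x}(f)+xj$ is already the $\Pcal(X)\oplus\Jcal(X)$ decomposition of $xf$. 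This is a legitimate and more elementary strategy (it avoids the auxiliary Lemma~4.5 of the forward-exchange paper), and your Step~2 is identical to the paper's use of \eqref{eq:fmultx}.

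However, one supporting claim in your Step~1 is false as stated: $\Jcal(X\setminus x)\subseteq\Jcal(X)$ does not hold, and the matroid fact you invoke (``a cocircuit of $X\setminus x$ is a union of cocircuits of $X$'') is also wrong. Counterexample: $X=((1,0),(0,1),(1,1))$, $x=(1,0)$. Then $\{(0,1)\}$ is a cocircuit of $X\setminus x$, so $s_2\in\Jcal(X\setminus x)$, but every cocircuit of $X$ has two elements, so $\Jcal(X)$ is generated in degree $2$ and contains no linear forms; in general a cocircuit of the deletion has the form $C\setminus x$ for a cocircuit $C$ of $X$, not a union of cocircuits. Fortunately your argument never needs $j\in\Jcal(X)$, only $xj\in\Jcal(X)$, and this is exactly what the correct observation you state just before does give: if $C$ is a cocircuit of $X\setminus x$, then $X\setminus(C\cup x)=(X\setminus x)\setminus C$ has deficient rank, so $C\cup\{x\}$ contains a cocircuit $C'$ of $X$, whence $x\,p_C=p_{C\cup x}$ is divisible by $p_{C'}$ and lies in $\Jcal(X)$; hence $x\,\Jcal(X\setminus x)\subseteq\Jcal(X)$. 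With that one-line repair (and your correct verification that $x\,\Pcal(X\setminus x)\subseteq\Pcal(X)$; the containment $\Pcal(X\setminus x)\subseteq\Pcal(X)$ is not actually needed), uniqueness of the direct-sum decomposition gives $\psi_X(xf)=x\psi_{X\setminus x}(f)$ and the proof goes through. Your caveat about coloops is apt: if $x$ is a coloop, $X\setminus x$ does not span $U$, the decomposition $\sym(U)=\Pcal(X\setminus x)\oplus\Jcal(X\setminus x)$ fails and $\psi_{X\setminus x}$ must be interpreted via the basis-sum formula of Remark~\ref{Remark:ProjectionInBasis} (under which the paper's proof still works); but since the lemma is only applied later to elements that are neither loops nor coloops, treating that case separately, as you suggest, is harmless.
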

\begin{proof}
 Note that the statement is trivial for $x=0$, so from now on we assume that $x$ is not a loop.
 Since $\psi_X(f)$ is independent of the order imposed on $X$, we may rearrange the list elements \st $x$ is minimal.

 Let $x\in B\in \BB(X)$.  Since $x$ is minimal, $\spa(x) \cap (X\setminus E(B)) = \{x\}$. By  
Lemma~4.5 in \cite{lenz-forward-2012}, this implies that $\pair{xf}{R_B}=  \pair{f}{D_xR_B} = \pair{f}{0} = 0$.

Let $x\not\in B \in \BB(X)$. Since $x$ is minimal, this implies that $x\not\in E(B)$. Then  
 \begin{align}
  \pair{xf}{R^B_{X\setminus E(B)}} &= \pair{f}{D_x R^B_{X\setminus E(B)}}    
  = \pair{f}{R^B_{X\setminus (E(B)\cup x)}}.
 \end{align}
 Using the two previous observations we obtain:
\begin{align}
  \psi_X(xf)  &=  \underbrace{\sum_{x \in B \in \BB(X)} \pair{xf}{R^B_{X\setminus E(B)}} \, Q_B}_{ = 0 } 
  + \sum_{ x \not \in B \in \BB(X) } \pair{xf}{R^B_{X\setminus E(B)}} \, Q_B
  \\
  &=  \sum_{  B \in \BB(X \setminus x)}  \pair{f}{R^B_{X\setminus (E(B)\cup x)}}\,  x\, Q_B^{X\setminus x} = x\psi_{X\setminus x}(f).
\end{align}
The second to last equality follows from the fact that $Q_B^X = p_{X\setminus (B\cup E(B))} = xp_{X\setminus (B\cup E(B) \cup x)} 
  = Q_B^{X\setminus x}$ if $x\not\in B\cup E(B)$.

 We can deduce the second claim  using \eqref{eq:fmultx}: 
 \begin{align}
  x f_z^{X \setminus x} &= 
  x \psi_{X\setminus x}( \todd ( X\setminus x, z )) = \psi_X( x \todd(X\setminus x, z) ) 
  \\ &=
  \psi_X(  \todd(X, z)  -  \todd (X, z+x) ) = f_{z}^X - f_{z+x}^X. \qedhere 
 \end{align}
\end{proof}
Recall that
$\pi_x : \sym(U) \to \sym(U/x)$ denotes the canonical projection.
\begin{Lemma}
 \label{Lemma:Surjection}
Let $x\in X$ be neither a loop nor a coloop. 
 Let $z\in \Lambda$ and let $\bar z = \pi_x(z)\in \Lambda/x$.
Then
 \begin{align}
    \pi_x(f_z) = f_{\bar z}.
 \end{align}
\end{Lemma}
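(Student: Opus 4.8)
I want to prove $\pi_x(f_z^X) = f_{\bar z}^{X/x}$ when $x$ is neither a loop nor a coloop, by running a computation parallel to Lemma~\ref{Lemma:Injection} but on the ``contraction side'' of the decomposition \eqref{eq:DelConDecomposition}. First I would use the fact that $\psi_X(f_z)$ is independent of the order on $X$ to rearrange $X$ so that $x$ is the \emph{minimal} element of the list (the same normalisation used in the proof of Lemma~\ref{Lemma:Injection}). With this choice, for a basis $B\in\BB(X)$ containing $x$, minimality of $x$ forces $x\notin E(B)$, and the externally active set $E(B)$ computed in $X$ coincides with the externally active set $E(B\setminus x)$ computed in $X/x$ (passing to the quotient does not change which circuit elements are maximal, since the order is inherited). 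Likewise $\BB(X/x)$ is exactly $\{\,\bar B := \pi_x(B\setminus x) : x\in B\in\BB(X)\,\}$, and $Q_{\bar B}^{X/x} = \pi_x(Q_B^X)$ because $Q_B^X = p_{X\setminus(B\cup E(B))}$ and applying $\pi_x$ to this product of linear forms gives the corresponding product in $\sym(U/x)$ (none of the factors is $x$, as $x\in B$).

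\textbf{Key steps.} Applying $\pi_x$ to the basis expansion $f_z^X = \sum_{B\in\BB(X)} \pair{\todd(X,z)}{R^B}\, Q_B^X$ and discarding the terms with $x\notin B$ — which I claim $\pi_x$ kills, since $Q_B^X = x\cdot(\text{something})$ when $x\notin B\cup E(B)$, exactly as in the last displayed equality of Lemma~\ref{Lemma:Injection}'s proof — I am left with
\begin{align}
 \pi_x(f_z^X) = \sum_{x\in B\in\BB(X)} \pair{\todd(X,z)}{R^B_{X\setminus E(B)}}\, Q_{\bar B}^{X/x}.
\end{align}
To identify this with $f_{\bar z}^{X/x} = \sum_{\bar B\in\BB(X/x)} \pair{\todd(X/x,\bar z)}{R^{\bar B}_{(X/x)\setminus E(\bar B)}}\, Q_{\bar B}^{X/x}$, it suffices to match coefficients, i.e.\ to show
\begin{align}
 \pair{\todd(X,z)}{R^B_{X\setminus E(B)}} = \pair{\todd(X/x,\bar z)}{R^{\bar B}_{(X/x)\setminus E(\bar B)}}
\end{align}
for each basis $B\ni x$. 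Here I would use that $\todd(X,z)$ factors as $\frac{x}{1-e^{-x}}\cdot\todd(X\setminus x, z)$ when $x\in S$ (the factor $\frac{x}{1-e^{-x}} = x + O(x^2)$), together with a compatibility statement from \cite{lenz-forward-2012} relating the convolution products $R^B_{X\setminus E(B)}$ in $U$ and $R^{\bar B}_{(X/x)\setminus E(\bar B)}$ in $U/x$: since $x$ is minimal, it sits at the bottom of the flag \eqref{eq:flagofsubspaces} for $B$, so $R^B = R^B_1 * \cdots * R^B_d$ where the first factor $R^B_1$ is built from the elements of $X\setminus E(B)$ on the line $\spa(x)$ and the remaining factors pass, under $\pi_x$, to the flag for $\bar B$ in $X/x$. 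The pairing $\pair{\cdot}{\cdot}$ should then ``integrate out'' the $\spa(x)$-direction: pairing $\frac{x}{1-e^{-x}}$ against $R^B_1$ (which is the relevant one-dimensional box-spline-type piece on the line through $x$) yields $1$ — this is precisely the one-dimensional case, $\pair{\frac{x}{1-e^{-x}}}{T_{(x)} - \text{shift}} = 1$, reflecting that a single interior lattice point on a segment carries box-spline weight summing correctly — and what remains is exactly $\pair{\todd(X/x,\bar z)}{R^{\bar B}}$.

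\textbf{The main obstacle.} The delicate point is the last one: making the ``integrate out the $x$-direction'' argument rigorous. One must carefully track how $R^B$ decomposes under the flag when $x$ is the minimal basis element, how $E(B)$ interacts with this (elements of $X\setminus E(B)$ lying on $\spa(x)$ versus those not), and verify that the one-dimensional pairing contributes exactly the factor $1$ needed — in particular that the shift $z\mapsto\bar z$ and the fact that $z$ is a sum of elements of $X$ are used correctly, and that $\frac{x}{1-e^{-x}}$ versus $\frac{-x}{1-e^{x}}$ (positive versus negative occurrences on the line) are handled with the right signs, matching the $(-1)^{|N_i|}$ and $(-1)^{|P_i|}$ conventions in \eqref{eq:BasisElementConvolution}. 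A clean way to organise this is to invoke a lemma from \cite{lenz-forward-2012} — analogous to the $D_x R_B$ computation used in Lemma~\ref{Lemma:Injection} — that directly expresses $\pair{g}{R^B}$ for $B\ni x$ in terms of a pairing over $U/x$; granting such a statement, the proof collapses to the coefficient-matching displayed above. I expect the bulk of the real work to be establishing (or quoting precisely) that compatibility lemma for the $R^B$'s under contraction.
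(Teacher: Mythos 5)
Your proposal follows essentially the same route as the paper: order $X$ so that $x$ is minimal, split the expansion $\sum_{B}\pair{\todd(X,z)}{R^B}\,Q_B$ according to whether $x\in B$, kill the terms with $x\notin B$ because then $x\mid Q_B$ and hence $\pi_x(Q_B)=0$, identify $\{\bar B: x\in B\in\BB(X)\}$ with $\BB(X/x)$ and $\pi_x(Q_B)$ with $Q_{\bar B}$, and then match coefficients. The one step you explicitly leave open --- the identity $\pair{\todd(X,z)}{R^B_{X\setminus E(B)}}=\pair{\todd(X/x,\bar z)}{R^{\bar B}_{(X/x)\setminus E(\bar B)}}$ for $x\in B$ --- is exactly where the paper does its remaining work, and it is closed by a single structural observation rather than by an external ``compatibility lemma'': since $x$ is minimal, $\spa(x)\cap(X\setminus E(B))=\{x\}$, so the first flag factor involves only $x$ itself and $R^B=(T_x-T_{-x})*R^{B\setminus x}_{X\setminus(E(B)\cup x)}$ is constant in the direction $x$; it can therefore be identified with $R^{\bar B}$ on $U/x$. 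Pairing a power series against a function constant in the direction $x$ annihilates every monomial containing $x$, so the pairing only sees $\pi_x(\todd(X,z))=\todd(X/x,\bar z)$: the factor $x/(1-e^{-x})$ contributes its constant term $1$ and $e^{-z}$ becomes $e^{-\bar z}$, which is precisely your coefficient identity. Note that your heuristic for the value $1$ (a one-dimensional box-spline/lattice-point count, ``$\pair{\tfrac{x}{1-e^{-x}}}{T_{(x)}-\text{shift}}=1$'') is not the right mechanism: the relevant first factor is a multivariate-spline difference that is constant along $\spa(x)$, and the $1$ arises simply because all positive powers of $x$ differentiate that constant to zero. With this observation stated and proved (it is the analogue, on the $x\in B$ side, of the fact $D_xR^B=0$ you quote from the injection lemma), your argument goes through; as written, the proposal is a correct outline with that key step deferred.
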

\begin{proof}
Since the maps $\pi_x$ and $\psi_X$ are independent of the order imposed on $X$, we may rearrange the list elements \st $x$ is minimal.

 Let $x\in B\in \BB(X)$. Given that $x$ is minimal,
 $R^B = (T_x - T_{-x}) *R^{B\setminus x}_{X\setminus (E(B)\cup x)}$ follows. Hence
  $R^B$ is constant in direction $x$ so we can interpret it as a function on  $U/x$ and identify it with $R^{\bar B}$. 
  In $\todd(X,z)$, the factor $x/(1- e^{-x})$  becomes $1$ if we set $x$ to $0$.
   Note that $x$ divides $Q_B$ if $x\not\in B$ since $x$ is minimal. Then
 \begin{align}
  \begin{split} \pi_x (f_z) &=   
  \underbrace{  \pi_x \Biggl(\sum_{x\not \in B\in \BB(X )}  \pair{\todd(X,z)}{R^B}\, Q_B \Biggr) }_{=0, \text{ since } x|Q_B.}
  \\ & \qquad+  \pi_x \Biggl(\sum_{x  \in B\in \BB(X )}  \pair{\todd(X,z)}{R^B}\, Q_B \Biggr)
  \end{split}
 \\ &=
        \sum_{\bar B\in \BB(X/x )}  \pair{ \todd( X/x, \bar z) }{R^{\bar B}}\, Q_{\bar B} =  f_{\bar z}. %
\qedhere
        \end{align}
\end{proof}

 In \cite{lenz-interpolation-2012}, the author showed that  the function
\begin{align}
 \gamma_X : \Pcal_-(X) \to  \Xi(X):= \{ f : \Lambda \to \R : \supp(f) \subseteq \Zcal_-(X)  \} 
\end{align}
 that maps $p$ to  $p (D) B_X|_\Lambda$ is an isomorphism.
We will now extend $\gamma_X$ to a map that is an isomorphism between $\Pcal(X)$ 
and a superspace of $\Xi(X)$. 

For a short affine regular vector $w\in U$ we define
\begin{align}
\begin{split}
\gamma_X^w : \Pcal(X)&\to \Xi^w(X):= \{ f : \Lambda \to \R : \supp(f) \subseteq  \Zcal(X,w)  \}  %
    \\
	      p &\mapsto \lim_w p(D_{\mathrm{pw}}) B_X|_\Lambda .   
\end{split}
	      \end{align}
\begin{figure}[tb]
\begin{center}
 \includegraphics{./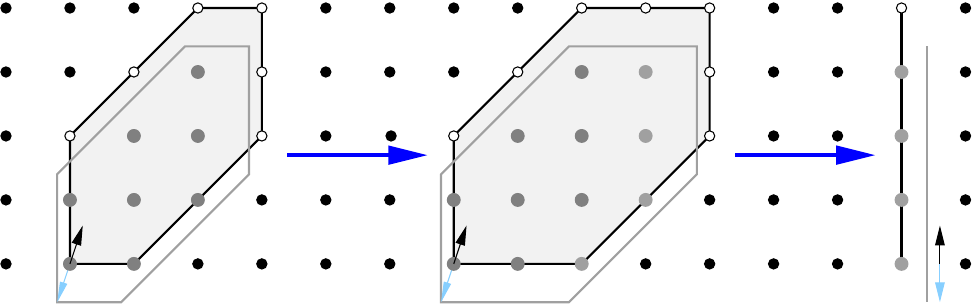}
  \caption{Deletion and contraction for the set $\Zcal(X,w)$.}
  \label{Figure:ZonotopeDelCon}
\end{center}
\end{figure}
\begin{Proposition}
\label{Proposition:ExactSequences}
 Let $X\subseteq \Lambda\subseteq U \cong \R^d$ be a list of vectors that is totally unimodular and spans $U$.
  Let $x\in X$ be neither a loop nor a coloop.
  
  Then
 the following diagram of real vector spaces is 
 commutative, the rows are exact and the vertical maps are isomorphisms:
\begin{align}
\label{eq:ExactSequencesExtended}
\xymatrix{
  0 \ar[r] &     \Pcal(X\setminus x) \ar[r]^{\cdot x} \ar[d]^{\gamma^w_{X\setminus x}}  
   & \Pcal(X) \ar[r]^ { \pi_x } \ar[d]^{\gamma^w_{X}}
  &  \Pcal(X/x )\ar[r] \ar[d]^{\gamma^{\bar w}_{X / x}}  &  0
 \\
 0 \ar[r] & \Xi^w(X\setminus x) \ar[r]^{\nabla_x} &
     \Xi^w(X) \ar[r]^{\Sigma_x} &  \Xi^{\bar w}(X/x) \ar[r]  &  0
  }
\end{align}
\begin{align}
\text{where } \nabla_x(f)(z) &:=  f(z) - f(z - x) \nonumber \\
 \text{and }  \Sigma_x (f) (\bar z) &:=  \sum_{x\in\bar z \cap \Lambda } f(x)  = \sum_{\lambda\in \Z} f( \lambda x + z) \text{ for some } z\in \bar z.
\nonumber
\end{align}
\end{Proposition}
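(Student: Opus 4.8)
The plan is to verify the three properties of the diagram separately: exactness of the bottom row, commutativity of the two squares, and then deduce that the vertical maps are isomorphisms. Most of the structural work is already available. By the result from \cite{lenz-interpolation-2012}, $\gamma_X$ is an isomorphism onto $\Xi(X)$; but here we use the extended maps $\gamma^w_X$, which differ only in that the support condition is relaxed from $\Zcal_-(X)$ to $\Zcal(X,w) = (Z(X)-w)\cap\Lambda$. The first step is to check that $\gamma^w_X$ is well-defined: for $p\in\Pcal(X)$, the piecewise-polynomial function $\lim_w p(D_{\mathrm{pw}})B_X$ restricted to $\Lambda$ is supported in $\Zcal(X,w)$ — this follows because $B_X$ is supported in $Z(X)$, and the $w$-shifted evaluation at a boundary lattice point $u$ picks out the local piece on the alcove containing $u+\eps w$, which is the zero polynomial unless $u+\eps w\in Z(X)$, i.e. unless $u\in Z(X)-\eps w$, which for small $\eps$ means $u\in\Zcal(X,w)$.

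Next I would establish exactness of the bottom row. The maps $\nabla_x$ and $\Sigma_x$ are purely combinatorial: $\nabla_x$ is the finite difference in direction $x$, and $\Sigma_x$ sums a function along each line $\bar z + \Z x$. Injectivity of $\nabla_x$ on $\Xi^w(X\setminus x)$: if $f(z)=f(z-x)$ for all $z$ and $f$ has finite support, then $f\equiv 0$. Surjectivity of $\Sigma_x$ and the inclusion $\image\nabla_x\subseteq\ker\Sigma_x$ are immediate (telescoping). For the reverse inclusion $\ker\Sigma_x\subseteq\image\nabla_x$: given $g\in\Xi^w(X)$ with all line-sums zero, define $f(z):=\sum_{k\ge 0} g(z+kx)$ (a finite sum by bounded support); then $\nabla_x f = g$, and one checks $\supp(f)\subseteq\Zcal(X\setminus x,w)$ using that $Z(X\setminus x)$ is obtained from $Z(X)$ by collapsing the $x$-direction. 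A short count of dimensions, using $\dim\Pcal(X)=\abs{\Zcal(X,w)}$ and the deletion-contraction recursion $\abs{\Zcal(X,w)}=\abs{\Zcal(X\setminus x,w)}+\abs{\Zcal(X/x,\bar w)}$ (visible in Figure~\ref{Figure:ZonotopeDelCon}), will make the exactness bookkeeping clean.

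Then I would prove commutativity of the two squares. The left square asserts $\gamma^w_X(xp) = \nabla_x(\gamma^w_{X\setminus x}(p))$. This rests on the classical box-spline identity $B_X = \nabla_x B_{X\setminus x}$, i.e. $(xp)(D)B_X = p(D)x(D)B_X = p(D)(B_{X\setminus x} - B_{X\setminus x}(\cdot - x))$, which passes to local pieces and survives the $\lim_w$ evaluation because the chamber structure of $X$ refines that of $X\setminus x$. The right square asserts $\gamma^{\bar w}_{X/x}(\pi_x p) = \Sigma_x(\gamma^w_X(p))$; here the relevant fact is that the multivariate/box splines contract as $B_{X/x} = $ (pushforward of $B_X$ along $\pi_x$), i.e. integrating — discretely, summing — $B_X$ along the $x$-fibres yields $B_{X/x}$, and $\pi_x$ on $\sym(U)$ is exactly setting $x=0$, which is compatible with this pushforward. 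I expect this right square to be the main obstacle: one must handle the $\lim_w$ carefully, since $B_X$ is only piecewise polynomial and the fibrewise sum on $\Lambda$ must be matched against the local piece of $B_{X/x}$ selected by $\bar w$, so the projected regular vector $\bar w=\pi_x(w)$ has to be shown to select the image alcove consistently. Finally, once the diagram is commutative with exact rows and $\gamma^w_{X\setminus x}$, $\gamma^{\bar w}_{X/x}$ are isomorphisms (by induction on $\abs{X}$, the base case being $X$ a single vector or empty), the five lemma — or simply a direct diagram chase using injectivity on the left, surjectivity on the right, and the dimension count — forces $\gamma^w_X$ to be an isomorphism, completing the induction and the proof.
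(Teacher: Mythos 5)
Your proposal follows essentially the same route as the paper: exactness of the bottom row via the lattice-point combinatorics of $\Zcal(X,w)$ under deletion and contraction (which the paper packages as the bijection $\Zcal(X,w)\setminus \Zcal(X\setminus x,w)\to \Zcal(X/x,\bar w)$, a variant of a lemma from \cite{lenz-interpolation-2012}), citing known results for the top row, commutativity from the identities $D_xB_X=\nabla_x B_{X\setminus x}$ and fibre-summation $\sum_{\lambda\in\Z}B_X(\cdot+\lambda x)=B_{X/x}\circ\pi_x$ compatibly with the $\lim_w$ evaluation, and the isomorphism claim by induction plus a diagram chase, which is exactly what the paper does by deferring to \cite{lenz-interpolation-2012} after treating the base case. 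Two small repairs: your explicit preimage should be $f(z)=\sum_{k\le 0}g(z+kx)$ (as written, $\nabla_x f=-g(\cdot - x)$), and the correct base case of the induction is a list consisting only of loops and coloops (where $B_X$ is the indicator function of a parallelepiped and $\Zcal(X,w)$ is a single point), not ``a single vector or empty''.
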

\begin{proof}
 First note that the map
 $\Zcal(X,w) \setminus \Zcal(X\setminus x, w) \to \Zcal(X/x, \bar w)$
 that sends $z$ to $\bar z$ is a bijection. 
 This is a variant of  \cite[Lemma 17]{lenz-interpolation-2012} and it can be proved in the same way, using
 the fact that 
 $\dim \Pcal(X) = \vol(Z(X))= \abs{\Zcal(X,w)}$, which was established in \cite{holtz-ron-2011}.
 See also Figure~\ref{Figure:ZonotopeDelCon}.
 This  implies  that the second row is exact. Exactness of the first row is known (\eg \cite[Proposition 4.4]{ardila-postnikov-2009}).

 If $X$ contains only loops and coloops, then $\Pcal(X)=\R$ and $B_X$ is the indicator function of 
 the parallelepiped spanned by the coloops.
 For every short affine regular $w$, $\Zcal(X,w)$ contains a unique point $z_w$ and
 $\lim_w B_X(z_w) = 1$. Hence $\gamma_X^w$ is an isomorphism in this case.
 Note that this also holds for $U= \Lambda = \{0\}$. 
 In this case $w=0$, $Z(X)= \{0\} = \Zcal(X,w)$, and $B_X=\lim_w B_X = 1 $.

  The rest of the proof is analogous to the proof of 
 \cite[Proposition]{lenz-interpolation-2012} and therefore omitted. 
\end{proof}

For $z\in \Zcal(X,w)$, let
  $q_z^{X,w} = q_z^w := (\gamma_X^w)^{-1}( \delta_z ) \in \Pcal(X)$. By construction, this polynomial  satisfies
  $\lim_w q_z^w(D_{\mathrm{pw}}) B_X|_\Lambda = \delta_z$.
\begin{Lemma}
 \label{Lemma:qequalsf}
 Let $w \in U$ be a short affine regular vector and let  $z\in \Zcal(X,w)$.
 Then $q_z^w = f_z$.

 In particular,
      $q_z^w = q_z^{w'}$ for short affine regular  vectors $w$ and $w'$ \st $z\in \Zcal(X,w) \cap \Zcal(X,w')$.
\end{Lemma}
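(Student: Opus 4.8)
I want to prove that $q_z^w = f_z$ for every short affine regular $w$ and every $z \in \Zcal(X,w)$. The natural strategy is induction on the cardinality of $X$ (equivalently on $N-d$, the ``corank''), using the deletion-contraction machinery already assembled. The base case is when $X$ consists only of loops and coloops: then $\Pcal(X) = \R$, $B_X$ is the indicator function of the parallelepiped spanned by the coloops, $\Zcal(X,w)$ is a single point $z_w$, and $\gamma_X^w$ is an isomorphism $\R \to \R$ sending $1 \mapsto \delta_{z_w}$ (this is exactly what was checked in the proof of Proposition~\ref{Proposition:ExactSequences}). On the other hand $\todd(X,z_w)$ has its loop-factors equal to $1$ and its coloop-factors $\frac{x}{1-e^{-x}}$; since $Q_B = 1$ for the unique basis $B$ (the coloops), $\psi_X$ kills everything of positive degree and $f_{z_w} = \psi_X(\todd(X,z_w)) = 1 = q_{z_w}^w$. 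So the base case holds.

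\textbf{The inductive step.} Fix $x \in X$ that is neither a loop nor a coloop (if no such $x$ exists we are in the base case). I will use the commutative diagram \eqref{eq:ExactSequencesExtended} of Proposition~\ref{Proposition:ExactSequences} together with Lemma~\ref{Lemma:Injection} and Lemma~\ref{Lemma:Surjection}, which tell us that the family $(f_z)$ transforms under deletion-contraction in precisely the way that matches the maps $\nabla_x$ and $\Sigma_x$ on the bottom row. Concretely: for $z \in \Zcal(X,w)$, there are two cases according to the bijection $\Zcal(X,w) \setminus \Zcal(X\setminus x, w) \to \Zcal(X/x,\bar w)$ used in that proof.

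\emph{Case 1: $z \in \Zcal(X\setminus x, w)$.} One checks that $z - x \in \Zcal(X\setminus x, w)$ as well (both $z$ and $z-x$ lie in the shifted zonotope $Z(X\setminus x) - w$), and the pair $\{z, z-x\}$ is exactly a fiber of $\nabla_x$ worth of information. By Lemma~\ref{Lemma:Injection}, $x f_z^{X\setminus x} = f_z^X - f_{z+x}^X$; re-indexing, $x f_{z-x}^{X\setminus x}$ expresses the relevant difference. Using the inductive hypothesis $q_z^{X\setminus x, w} = f_z^{X\setminus x}$ on $X\setminus x$, the commutativity of the left square of \eqref{eq:ExactSequencesExtended} ($\gamma_X^w \circ (\cdot x) = \nabla_x \circ \gamma_{X\setminus x}^w$) forces $\gamma_X^w(x f_z^{X\setminus x}) = \nabla_x(\delta_z) = \delta_z - \delta_{z+x}$ on the nose.

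\emph{Case 2: $z \notin \Zcal(X\setminus x, w)$, so $\bar z \in \Zcal(X/x, \bar w)$.} By Lemma~\ref{Lemma:Surjection}, $\pi_x(f_z^X) = f_{\bar z}^{X/x}$, and by the inductive hypothesis on $X/x$ this equals $q_{\bar z}^{X/x, \bar w}$, so $\gamma_{X/x}^{\bar w}(\pi_x f_z^X) = \delta_{\bar z}$. Commutativity of the right square gives $\Sigma_x(\gamma_X^w(f_z^X)) = \delta_{\bar z}$, i.e.\ $\gamma_X^w(f_z^X)$ sums to $\delta_{\bar z}$ along each $x$-fiber. Combined with what Case~1 established about the $\Pcal(X\setminus x)$-part, and exactness of the bottom row, this pins down $\gamma_X^w(f_z^X)$ uniquely; matching it against $\delta_z = \gamma_X^w(q_z^w)$ and using injectivity of $\gamma_X^w$ yields $f_z^X = q_z^w$. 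A clean way to package Cases 1--2 is: the map $f_z \mapsto \gamma_X^w(f_z)$ and the map $\delta_z = \gamma_X^w(q_z^w)$ agree after applying both $\pi_x$-compatible projections, and the diagram is a pullback in the relevant sense, so they agree. The final sentence of the lemma, $q_z^w = q_z^{w'}$ whenever $z \in \Zcal(X,w) \cap \Zcal(X,w')$, is then immediate: both equal $f_z$, which does not depend on $w$.

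\textbf{Main obstacle.} The delicate point is the bookkeeping in Case~1 versus Case~2 --- specifically, verifying that the combinatorial bijection $\Zcal(X,w)\setminus \Zcal(X\setminus x, w) \cong \Zcal(X/x, \bar w)$ is compatible with the analytic maps $\nabla_x, \Sigma_x$ at the level of individual $\delta$-functions, so that the deletion-contraction recursions for $(f_z)$ (Lemmas~\ref{Lemma:Injection} and~\ref{Lemma:Surjection}) and for $(q_z^w)$ (inherited from the bottom row of \eqref{eq:ExactSequencesExtended}) literally coincide rather than merely up to some reindexing. Once one is careful that ``$z$ vs.\ $z+x$'' and ``short $w$ vs.\ $\bar w$'' line up correctly --- which is the content of the variant of \cite[Lemma 17]{lenz-interpolation-2012} invoked in Proposition~\ref{Proposition:ExactSequences} --- the rest is a formal diagram chase. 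I expect essentially no new analytic input beyond what Proposition~\ref{Proposition:ExactSequences} already provides.
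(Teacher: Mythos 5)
Your overall strategy (induction on the size of $X$ with base case a list of loops and coloops, then deletion--contraction of a fixed $x$ via Lemmas~\ref{Lemma:Injection} and~\ref{Lemma:Surjection} and the diagram \eqref{eq:ExactSequencesExtended}) is exactly the paper's, and your base case and the two local computations are correct. The gap is in the concluding step, where you claim that $\Sigma_x(\gamma_X^w(f_z))=\delta_{\bar z}$, ``combined with what Case~1 established'' and ``exactness of the bottom row'', pins down $\gamma_X^w(f_z)$ because ``the diagram is a pullback in the relevant sense''. It is not: the bottom row is a short exact sequence, $\ker\Sigma_x=\image\nabla_x$ is large, and an element of $\Xi^w(X)$ is in no way determined by its $\Sigma_x$-image. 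Concretely, writing $E_u:=\gamma_X^w(f_u)-\delta_u$, your Case~1 gives $E_{z'}=E_{z'+x}$ for $z'\in\Zcal(X\setminus x,w)$ (a relation between error functions attached to \emph{different} base points), and your Case~2 gives $\Sigma_x(E_z)=0$; together these say the errors along an $x$-fibre all coincide and each sums to zero over every fibre, but they do not force any single $E_z$ to vanish --- a common error of the form $\nabla_x(\varphi)$ with $\varphi\neq 0$ is compatible with both constraints (already for $X=(1,1)$ these relations leave a one-parameter ambiguity). Note also that for a $z$ falling under your Case~1 you never actually conclude $f_z=q_z^w$ at all; the conclusion is only asserted inside Case~2 ``combined with Case~1'', so the case split does not carry the argument.

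What the paper does at this point is to work with the single polynomial $g_z:=f_z-q_z^w\in\Pcal(X)$ rather than fibrewise with its $\gamma$-image: Step~1 shows $g_z=g_{z+\lambda x}$ (the \emph{same} element of $\Pcal(X)$ for all lattice points of the fibre through $z$ inside $\Zcal(X,w)$, using Lemma~\ref{Lemma:Injection}, the left square, and the inductive hypothesis for $X\setminus x$), Step~2 shows $\pi_x(g_z)=0$ (Lemma~\ref{Lemma:Surjection}, the right square, induction for $X/x$), and only then does Step~3 combine these two facts, via $\Sigma_x$ and the count $\abs{\Zcal(X,w)\cap(\spa(x)+z)}$, to force $\gamma_X^w(g_z)=0$ and hence $g_z=0$ by injectivity of $\gamma_X^w$. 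That combination is precisely the ``formal diagram chase'' you defer to your final paragraph; it is the substantive part of the proof and is missing from your argument, so as written the proposal stops one genuine step short.
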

\begin{proof}
 Fix a short affine regular $w$. 
 We will show by induction that $q_z^w = f_z$ for all $ z \in \Zcal(X,w) $.

 If $X$ contains only loops and coloops, then $\Pcal(X)=\R$ and $B_X$ is the indicator function of the parallelepiped spanned by the coloops.
 Hence $f_z= \psi_X(1 + \ldots) = 1 =  q_z^w $.
 Note that this is also holds for $U= \Lambda = \{0\}$. In this case $w=0$, $Z(X)= \{0\}= \Zcal(X,w)$, and $B_X=\lim_w B_X = 1$.

 Now
 suppose that there is an element $ x\in X $ that is neither a loop nor a coloop and suppose that 
  the statement is already true for $X/x$ and $X\setminus x$. 

   Let $ g_z^w = g_z := f_z - q_z^w \in \Pcal(X)$.

 \emph{Step 1. If $z' = z+\lambda x$ for some $\lambda\in \Z$, then  $g_z = g_{z'}$:}
 by Lemma~\ref{Lemma:Injection}, $xf_z^{X\setminus x} = f^X_z - f^X_{z+x}$.
Because of the commutativity of \eqref{eq:ExactSequencesExtended}, %
 $\gamma_X^w(xq_z^{X\setminus x,w}) = \nabla_x ( \delta_z )$. Since $\gamma_X^w$ is an isomorphism this implies $xq_z^{X\setminus x,w} = q_z^{X,w} - q_{z+x}^{X,w}$.
  By assumption, $q_z^{X\setminus x} = f_z^{X\setminus x}$. Hence we can deduce that
  $f^X_z - f^X_{z+x} = q_z^{X,w} - q_{z+x}^{X,w}$ and the claim follows.

\emph{Step 2. $\pi_x(g_z)=0$:}
 it follows from 
 Proposition~\ref{Proposition:ExactSequences}
  that $\pi_x(q_z^w)=q_{\bar z}^{\bar w}$.
Taking into account Lemma~\ref{Lemma:Surjection} this implies
 $\pi_x( g_z ) = f_{\bar z} - q_{\bar z}^{\bar w}$. This is zero by assumption.

\emph{ Step 3. $ g_z = 0 $: }
 using %
 the commutativity of \eqref{eq:ExactSequencesExtended} again
and steps 1 and 2, we obtain
\begin{align}
    \abs{ \Zcal( X,w ) \cap ( \spa(x)+z) } \cdot \gamma_X^w(g_z) =  \Sigma_x ( \gamma_X^w( g_z) ) = \gamma^{\bar w}_{X/x} ( \pi_x ( g_z))= 0.
\end{align}
 Hence $\gamma_X^w(g_z)=0$
and the claim follows since $\gamma_X^w$ is an isomorphism.
\end{proof}
\begin{Lemma}
 \label{Lemma:BoxSplineDiffPspace}
 Let $f\in \R[[s_1,\ldots, s_d]]$ and let $w$ be a short affine regular vector. 
 \begin{asparaenum}
 \item Then  $ \lim_w f(D_{\mathrm{pw}})B_X = \lim_w \psi_X( f )(D_{\mathrm{pw}})B_X $.
 \item %
 If $\psi_X(f)(D)B_X $ is continuous, then 
  $f(D)B_X$ can be extended continuously \st $\psi_X(f)(D)B_X = f(D)B_X $. 
 \end{asparaenum}
\end{Lemma}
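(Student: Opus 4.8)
The plan is to prove part (a) first, since part (b) follows from it almost immediately. Recall from the definition of $\psi_X$ on page~\pageref{eq:ProjectionMap} that $\sym(U) = \Pcal(X)\oplus\Jcal(X)$, that $\psi_X$ is the graded projection onto $\Pcal(X)$, and that $\psi_X$ annihilates every homogeneous polynomial of degree $\geq N-d+1$. Writing $f = \sum_i g_i$ with $g_i$ homogeneous of degree $i$, we have $f - \psi_X(f) = \sum_i (g_i - \psi_X(g_i)) =: h$, where each summand lies in $\Jcal(X)$, and moreover $g_i - \psi_X(g_i) = g_i$ once $i \geq N-d+1$. So it suffices to show two things: (i) for $p\in\Jcal(X)$ with $\deg p \le N-d$ we have $\lim_w p(D_{\mathrm{pw}})B_X = 0$; and (ii) for any homogeneous $g$ of degree $\geq N-d+1$ we have $\lim_w g(D_{\mathrm{pw}})B_X = 0$. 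Point (ii) is immediate because each local piece $p_\cfrak$ of $B_X$ is a polynomial of degree at most $N-d$ (this is standard from the theory recalled in Section~\ref{Section:ZonotopalAlgebra}, e.g.\ $\dim\Dcal(X)=\vol(Z(X))$ and the degree bound on $\Dcal(X)$), so any differential operator of order $\geq N-d+1$ kills it identically. For point (i), I would use the duality between $\Pcal(X)$ and $\Dcal(X)$ under $\pair{\cdot}{\cdot}$ together with the characterisation $\Dcal(X) = \{f\in\sym(V):\pair{p}{f}=0 \text{ for all }p\in\Jcal(X)\}$ from \eqref{eq:DahmenMicchelliCocircuit}: each local piece $p_\cfrak$ lies in $\Dcal(X)$, hence for $p\in\Jcal(X)$ the polynomial $p(D)p_\cfrak$ vanishes at the origin, and in fact — applying the same identity after translating by lattice vectors or differentiating, using that $\Dcal(X)$ is closed under $\partial$ — one gets $p(D)p_\cfrak \equiv 0$ on all of $U$; I would phrase this via the fact that $\Jcal(X)$ annihilates $\Dcal(X)$ as a module under the action. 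Combining (i) and (ii), $h = f - \psi_X(f)$ acts as zero on each $p_\cfrak$, i.e.\ $\lim_w h(D_{\mathrm{pw}})B_X = 0$, which is exactly part (a).

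For part (b), suppose $\psi_X(f)(D)B_X$ is a continuous function on $U$. Since $\psi_X(f)$ is a polynomial, $\psi_X(f)(D)p_\cfrak$ is a polynomial on each closed alcove $\cfrak$, and continuity means these polynomials agree on overlaps; call the resulting continuous function $F$. By part (a), for every alcove $\cfrak$ the germ of $f(D_{\mathrm{pw}})B_X$ equals $\psi_X(f)(D)p_\cfrak$, which is the restriction of $F$ to $\cfrak$. Hence defining $f(D)B_X := F$ gives a continuous extension with $f(D)B_X = \psi_X(f)(D)B_X$ as claimed; the limit with respect to $w$ can then be dropped because the function is genuinely continuous.

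The main obstacle I anticipate is making point (i) fully rigorous, i.e.\ upgrading "$p(D)p_\cfrak$ vanishes at $0$" to "$p(D)p_\cfrak$ vanishes identically on $U$" for $p\in\Jcal(X)$. The clean way is to observe that $\Dcal(X)$ is a finite-dimensional space closed under all partial derivatives, so that $\pair{p}{q} = 0$ for all $q\in\Dcal(X)$ together with $\partial^\alpha p_\cfrak\in\Dcal(X)$ forces $(p(D)p_\cfrak)(0)$ and all its derivatives — i.e.\ all Taylor coefficients — to vanish. One must be a little careful that $p_\cfrak$ really lies in $\Dcal(X)$ (this is the defining property of the Dahmen--Micchelli space recalled just before \eqref{eq:DahmenMicchelliCocircuit}) and that the pairing-with-derivatives bookkeeping is correct, but no genuinely new idea is needed beyond the $\Pcal$--$\Dcal$ duality already set up in Section~\ref{Section:ZonotopalAlgebra}.
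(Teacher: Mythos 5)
Your proposal is correct and follows essentially the same route as the paper: write $f-\psi_X(f)$ as a sum of homogeneous elements of $\Jcal(X)$, note that each local piece $p_\cfrak$ lies in $\Dcal(X)$ so that \eqref{eq:DahmenMicchelliCocircuit} forces $(f-\psi_X(f))(D)p_\cfrak=0$, and deduce (b) by defining the continuous extension alcove by alcove. Your extra care in upgrading ``vanishes at $0$'' to ``vanishes identically'' (via $D$-closedness of $\Dcal(X)$, equivalently the ideal property of $\Jcal(X)$) and in noting the degree bound on $p_\cfrak$ for the power-series action is exactly the bookkeeping the paper leaves implicit.
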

\begin{proof}
 Let $u\in U$ and let $\cfrak$ be the alcove \st 
  $u+ \eps w\in \cfrak$ for all sufficiently small $\eps> 0$.
 Let $p_\cfrak$ be the polynomial that agrees with $B_X$ on the closure of $\cfrak$.
 By definition of the map $\psi_X$,  
 for all $i$ the degree $i$ part of 
 $j = f  - \psi_X(f)  $ is contained in $ \Jcal(X) $.
  By definition of $\Dcal(X)$, $p_\cfrak\in \Dcal(X)$.  Then \eqref{eq:DahmenMicchelliCocircuit}
  implies $j (D) p_\cfrak = 0$.
   Hence $\lim_w f(D_{\mathrm{pw}}) B_X (u) = f(D) p_\cfrak (u) = 
    \psi_X(f) (D) p_\cfrak(u) = \lim_w \psi_X(f) (D_{\mathrm{pw}}) B_X (u)  $. %

    If $\psi_X(f)(D)B_X$ is continuous, then 
    \begin{align}
    \psi_X(f)(D)B_X(u) = \lim_w \psi_X(f)(D_{\mathrm{pw}}) B_X(u) = \lim_w f(D_{\mathrm{pw}})
     B_X(u)
     \end{align}
     for all short affine regular vectors $w$, so if we define  $f(D)B_X (u) := \psi_X(f)B_X(u)$ for all $u\in U$,
     we obtain a continuous extension of  $f(D)B_X$ to $U$. 
   \end{proof}

\begin{proof}[Proof of Theorem~\ref{Theorem:MainTheorem} (Main Theorem) and of Theorem~\ref{Theorem:MainTheoremBoundary}]
 If $w$ is a short affine regular vector and $z\in \Zcal(X,w)$, then
  $\lim_w f_z (D_{\mathrm{pw}}) B_X|_\Lambda = \delta_z$ 
 by Proposition~\ref{Proposition:ExactSequences} and Lemma~\ref{Lemma:qequalsf}. %

 Now let $z \in \Zcal_-(X)$. Then by 
 Theorem~\ref{Theorem:weakHoltzRon}, there exists $q_z^-\in \Pcal_-(X)$ \st
  $q_z^-(D)B_X|_{\Zcal_-(X)} = \delta_z$ and $q_z^-(D)B_X$ is continuous.
 Continuity implies that  $q_z^-(D)B_X$ vanishes on the boundary of $Z(X)$, so $q_z^-(D)B_X|_\Lambda = \delta_z$.
  Furthermore, for any short affine regular $w$, $\lim_w q_z^- (D_{\mathrm{pw}}) B_X|_\Lambda = \delta_z$.
   Since $\gamma_X^w$ is injective, $q_z^- = q_z^w = f_z$ must hold. Hence, $f_z\in \Pcal_-(X)$.

 
 To finish the proof, note that 
 Lemma~\ref{Lemma:BoxSplineDiffPspace} implies that 
$ \lim_w f_z(D_{\mathrm{pw}})B_X|_\Lambda = \lim_w \todd(X,z)(D_{\mathrm{pw}})B_X |_\Lambda$ %
and if $f_z(D) B_X$ is continuous it is possible to extend $\todd(X,z)(D) B_X$ continuously \st
 $f_z(D) B_X = \todd(X,z)(D) B_X$.
\end{proof}

\begin{proof}[Proof of Corollary~\ref{Corollary:DahmenMicchelli}]
Let $ p := \sum_{z \in \Zcal_-(X)} B_X(z) f_z$.  By the Main Theorem,
  $p\in \Pcal_-(X)$ and $p (D) B_X$ and $B_X = 1 (D)  B_X$ agree on $\Zcal_-(X)$. %
   By the uniqueness part of Theorem~\ref{Theorem:weakHoltzRon} this implies $p = 1$.

   Using the previous observation and Corollary~\ref{Corollary:KhovanskiiPukhlikov} we can deduce  \eqref{eq:DahmenMicchelli}:  
  \begin{align}
  B_X *_d \Tcal_X(u) &= \sum_{\lambda \in \Lambda} B_X( u - \lambda ) \Tcal_X(\lambda) = \sum_{ z \in \Lambda } B_X( z ) \Tcal_X( u - z)
  \\ &=
  \sum_{ z \in \Lambda } B_X( z )  f_z 
  (D) T_X(u) = T_X(u). \qedhere
  \end{align}
\end{proof}

\begin{proof}[Proof of Corollary~\ref{Corollary:InternalPbasis}]
 By Theorem~\ref{Theorem:MainTheorem}, $\{ f_z : z\in \Zcal_-(X) \}$ is a linearly independent subset of $\Pcal_-(X)$.
 This set is actually a basis since
 $\abs{\Zcal_-(X)} = \dim \Pcal_-(X)$.
 This equality follows from Theorem~\ref{Theorem:weakHoltzRon}. It was  also proven  in \cite{holtz-ron-2011}.
\end{proof}
\begin{proof}[Proof of Corollary~\ref{Corollary:newPbasis}] 
 By construction, $\{f_z : z\in \Zcal(X,w) \} \subseteq \Pcal(X)$.
 Proposition~\ref{Proposition:ExactSequences} and Lemma~\ref{Lemma:qequalsf} imply that the set  is actually a basis. 
\end{proof}
\begin{proof}[Proof of Corollary~\ref{Corollary:interalPcontinuous}]
 ``$\subseteq$'' is part of Theorem~\ref{Theorem:weakHoltzRon}.

``$\supseteq$'': Let $p\in \Pcal(X)$ and suppose that $p(D)B_X$ is continuous. Let $w$ be a short affine regular vector.
 By Corollary~\ref{Corollary:newPbasis}, there exist uniquely determined $\lambda_z\in \R$ \st
 $p = \sum_{z\in \Zcal(X,w)} \lambda_z f_z$.

 Let $z\in \Zcal(X,w) \setminus \Zcal_-(X)$.
 Since 
 $z-w\not\in Z(X)$, $\lim_{-w} p(D_{\mathrm{pw}}) B_X (z) = 0$ holds.
 As we assumed that $p(D)B_X$ is continuous,
 this implies that 
 \begin{align}
  0 = \lim_w p(D_{\mathrm{pw}}) B_X (z) = \sum_{y\in \Zcal(X,w) } \lambda_y \delta_y(z) = \lambda_z.
  \end{align}
 Hence $p = \sum_{z\in \Zcal_-(X)} \lambda_z f_z$, which is in $\Pcal_-(X)$ by the Main Theorem.
\end{proof}

%
%
%
%

\begin{appendix}

\section{The univariate case and residues}
\label{Section:UnivariateCase}

In this appendix we will  give an alternative proof of (a part of) the Main Theorem in the univariate case. 
 This proof was provided by Mich\`ele Vergne.
 
A totally unimodular list of vectors $X\subseteq \Z^1 \subseteq \R^1$ contains only entries in $\{-1,0,1\}$.
Suppose that it contains  $a$ times $-1$ and $b$ times $1$. The zonotope $Z(X)$ is then the interval
 $[- a, b]$. We may assume that $X$ does not contain any zeroes and we choose $N$ \st  $a + b = N + 1$.
 Then
 $\Pcal(X)= \R[s]_{\le N}$ and $\Pcal_-(X)= \R[s]_{\le N-1}$.
 
Let $z \in \Zcal_-(X) = \{-a +1,\ldots, b -1   \}$. Then
there exist  \emph{positive} integers $\alpha$ and $\beta$ \st
\begin{align}
\todd(X,z):=e^{-zs}\left(\frac{s}{e^{s} - 1}\right)^{a} \left(\frac{s}{1 - e^{-s}}\right)^{b} 
 = \left(\frac{s}{e^{s} - 1}\right)^{\alpha} \left(\frac{s}{1 - e^{-s}}\right)^{\beta}.
\end{align}

In this case, $\psi_X : \R[s] \to \R[s]_{\le N}$ is the map that forgets all monomials of degree greater or equal $N+1$. 
\begin{Lemma}
\label{Proposition:OneDimCase}
 Let $X\subseteq \Z^1\subseteq \R^1$ be a list of vectors that is totally unimodular and
  let $z$ be an interior lattice point of the zonotope $Z(X)$.
 Then $f_z = \psi_X(\todd(X,z))\in \Pcal_-(X)$.
 \end{Lemma}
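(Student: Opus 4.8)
The plan is to reduce the statement to the vanishing of a single Taylor coefficient and then evaluate that coefficient as a residue. Recall from the discussion preceding the lemma that $\todd(X,z) = \bigl(\tfrac{s}{e^s-1}\bigr)^{\alpha}\bigl(\tfrac{s}{1-e^{-s}}\bigr)^{\beta}$, where, since $X$ consists of $a$ copies of $-1$ and $b$ copies of $1$ with $a+b=N+1$ and $z\in\{-a+1,\dots,b-1\}$, one reads off $\alpha = a+z$, $\beta = b-z$, so that $\alpha,\beta\ge 1$ and $\alpha+\beta = N+1$. Recall also that in the univariate case $\psi_X\colon\R[s]\to\R[s]_{\le N}$ simply deletes all monomials of degree $\ge N+1$ (the only cocircuit is $X$ itself, whence $\Jcal(X)=s^{N+1}\R[s]$ and $\R[s]=\Pcal(X)\oplus\Jcal(X)$). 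Therefore $f_z=\psi_X(\todd(X,z))$ automatically lies in $\Pcal(X)=\R[s]_{\le N}$, and it lies in $\Pcal_-(X)=\R[s]_{\le N-1}$ precisely when the coefficient of $s^{N}$ in the power series $\todd(X,z)$ is zero.

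To compute that coefficient I would write it as a residue: $[s^N]\,\todd(X,z)=\res_{s=0}\dfrac{\todd(X,z)}{s^{N+1}}$. Substituting $N+1=\alpha+\beta$ and cancelling the powers of $s$ from the numerators of the Todd operator, the integrand collapses to $\dfrac{1}{(e^s-1)^{\alpha}(1-e^{-s})^{\beta}}$. Using $1-e^{-s}=e^{-s}(e^s-1)$ this becomes $\dfrac{e^{\beta s}}{(e^s-1)^{N+1}}$. Now I would perform the change of variable $u=e^s-1$, which is a local analytic coordinate at $s=0$ because $\tfrac{du}{ds}\big|_{0}=1\neq 0$; one has $ds=\tfrac{du}{1+u}$ and $e^{\beta s}=(1+u)^{\beta}$, so
\[
  [s^N]\,\todd(X,z) \;=\; \res_{s=0}\frac{e^{\beta s}}{(e^s-1)^{N+1}} \;=\; \res_{u=0}\frac{(1+u)^{\beta-1}}{u^{N+1}} \;=\; [u^N](1+u)^{\beta-1} \;=\; \binom{\beta-1}{N}.
\]
Since $z$ is interior we have $1\le\beta\le N$, hence $0\le\beta-1\le N-1<N$, so $\binom{\beta-1}{N}=0$. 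Thus $[s^N]\,\todd(X,z)=0$, and therefore $f_z=\psi_X(\todd(X,z))\in\R[s]_{\le N-1}=\Pcal_-(X)$, as claimed.

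There is no real obstacle here beyond bookkeeping: the entire argument is the algebraic simplification turning $\todd(X,z)$ into $e^{\beta s}/(e^s-1)^{N+1}$ and the observation that the substitution $u=e^s-1$ trivialises the residue. The one point that deserves care is the use of the hypothesis: it is exactly the \emph{interiority} of $z$ in $Z(X)=[-a,b]$ that forces both $\alpha\ge 1$ and $\beta\ge 1$, hence $1\le\beta\le N$, which is precisely what makes $\binom{\beta-1}{N}$ vanish. For a boundary lattice point one of $\alpha,\beta$ equals $0$ and the computation (correctly) no longer yields membership in $\Pcal_-(X)$.
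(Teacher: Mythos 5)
Your proof is correct and follows essentially the same route as the paper's appendix argument: reduce to the vanishing of the coefficient of $s^{N}$ in $\todd(X,z)$, write that coefficient as a residue, substitute the exponential variable, and conclude from the positivity of $\alpha$ and $\beta$ forced by interiority of $z$. Packaging the residue as the single binomial coefficient $\binom{\beta-1}{N}$ is only a cosmetic streamlining of the paper's expansion of $\sigma^{\alpha-1}$ in powers of $\sigma-1$.
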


\begin{proof} 
Suppose that $X$ contains $N+1$ non-zero entries.
 $\todd(X,z)$ agrees with its Taylor expansion
 \begin{align}
  \todd(X,z) = 1 + c_1 s + \ldots + c_N s^{N} + \ldots 
 \end{align}
The coefficients $c_i$  depend on $z$ and $X$ and can be expressed in terms of Bernoulli numbers.
It is sufficient to show that $c_N=0$. 
 This can be done by calculating the residue at the origin. Let $\gamma\subseteq \CC$ be a circle around the origin. Using the residue theorem and the considerations at the beginning 
  of this section we obtain: 
\begin{align}
 c_N &=  \res\nolimits_0 (z^{-(N+2)} \todd(X,z)) = \res\nolimits_0 \left(\frac{1}{(e^s - 1)^{\alpha}(1 - e^{-s})^{\beta}   } \right)
  \\ &=   \frac{1}{2\pi i} \oint_\gamma  \frac{1}{ (1-e^{-s})^\alpha (e^s -1 )^\beta} \,\di s
  =   
  \frac{1}{2\pi i} \oint_{\exp(\gamma)}  \frac{1}{ (1 - \sigma^{-1})^\alpha ( \sigma -1)^\beta} \frac 1 \sigma \,\di \sigma
  \\ &=     
  \nonumber
  \frac{1}{2\pi i} \oint_{\exp(\gamma)}  \frac{ \sigma^{\alpha-1} }{ (\sigma - 1)^{\alpha +\beta}}  \,\di \sigma
    =   \frac{1}{2\pi i} \oint_{\exp(\gamma)}  \sum_{ i = 0}^{\alpha - 1} \binom{\alpha - 1}{i} (\sigma - 1)^{-(\alpha+ \beta) + i   } \,\di \sigma.
  \end{align}
  The last equality can easily be seen   by substituting $y = \sigma+1$, expanding, and resubstituting.
  Note that $\exp(\gamma)$ is a curve around $1$. %
  Since $\alpha$ and $\beta$ are positive the residue of the integrand of the last integral at one is $0$.
 Hence $c_N = 0 $.
\end{proof}
\end{appendix}

\renewcommand{\MR}[1]{} 

%

\providecommand{\bysame}{\leavevmode\hbox to3em{\hrulefill}\thinspace}
\providecommand{\MR}{\relax\ifhmode\unskip\space\fi MR }
\providecommand{\MRhref}[2]{%
  \href{http://www.ams.org/mathscinet-getitem?mr=#1}{#2}
}
\providecommand{\href}[2]{#2}

\end{document}